\newcommand{\Sym}{\mathbb{S}}
\newcommand{\transp}{^\top}
\newcommand{\K}{{\cal K}}
\newcommand{\A}{{\cal A}}
\newcommand{\Aff}{{\rm Aff\,}}
\newcommand{\Lin}{{\rm Lin\,}}
\newtheorem {teo}{Theorem}
\newtheorem {lema}[teo]{Lemma}
\newtheorem {prop}[teo]{Proposition}
\newtheorem {cor}[teo]{Corollary}
\newtheorem {defi}[teo]{Definition}
\newtheorem {ej}[teo]{Example}
\def\xh{\hat{x}}
\def\yh{\hat{y}}
\def\sb{\bar{s}}
\def\yb{\bar{y}}
\def\calk{{\cal K}}
\def\A{\mathcal{A}}
\def\K{\mathcal{K}}
\def\SS{\Sym}
\def\eps{\varepsilon}
\def\intt{\mathop{\rm int}}
\def\val{\mathop{\rm val}}
\def\Min{\mathop{\rm min}}
\def\Max{\mathop{\rm max}}
\def\val{\mathop{\rm val}}
\def\1B{{\bf  1}}
\def\R{\mathbb{R}}
\newcommand{\rr}{\mathbb{R}}
\newcommand{\nn}{\mathbb{N}}
\newcommand\be{\begin{equation}}
\newcommand\ee{\end{equation}}
\newcommand\ba{\begin{array}}
\newcommand\ea{\end{array}}
\newcommand{\bean}{\begin{eqnarray*}}
\newcommand{\eean}{\end{eqnarray*}}
\newcommand{\ms}{\medskip}
\def \half{ \frac{1}{2}}
\def \Tr{ \mathop{\rm Tr}}
\def \val{ \mathop{\rm val}}
\def \Int{ \mathop{\rm int}}
\def \bd{ \mathop{\rm bd}}
\def\rar{\rightarrow}
\renewcommand{\bd}{\partial}
\def \y0{\bar Y_{0}}
\def \Arw{\mathop{\rm Arw}}
\def \ri{\mathop{\rm ri}}
\def \vec{\mathop{\rm vec}}
  \renewcommand\half{\mbox{$\frac{1}{2}$}}
\title{Refining the partition for multifold conic optimization problems}
\author{H\'ector Ram\'irez C.\thanks{Mathematical Engineering Department \& Center for  Mathematical Modeling (UMI 2807, CNRS), Universidad de Chile, Beauchef 851, Casilla 170-3, Santiago 3, Chile. E-mail: {\tt\small hramirez@dim.uchile.cl}}, \, Vera Roshchina\thanks{School of Mathematics and Statistics, UNSW Sydney, Kensington, 2052, NSW, Australia. While working on the paper this author was also affiliated with RMIT University (Melbourne, Australia) and Federation University Australia (Ballarat, Australia). E-mail: {\tt\small v.roshchina@unsw.edu.au}}}
\begin{document}
\maketitle

\begin{abstract}
In this paper we give a unified treatment of two different definitions of complementarity partition of multifold conic programs introduced independently in
[J. F. Bonnans and H. Ram\'irez C., Math. Program. 104 (2005), no. 2-3, Ser. B, 205--227]
for conic optimization problems, and in
 [J. Pe\~na and V. Roshchina, Math. Program. 142 (2013), no 1-2, Ser. A, 579--589] 
 for homogeneous feasibility problems. We show that both can be treated within the same unified geometric framework, and extend the latter notion to  optimization problems. We also show that the two partitions do not coincide, and their intersection gives a seven-set index partition.  Finally, we demonstrate that the partitions are preserved under the application of nonsingular linear transformations, and in particular that a standard conversion of a second-order cone program into a semidefinite programming problem preserves the partitions. 
\end{abstract}

\section{Introduction}\label{sec-introduction}

The complementarity partitions for linear programming and linear complementarity problems are well understood and are utilized heavily in the analysis of optimization problems as well as in numerical methods. In addition to strong duality (i.e., when both primal and dual problems have optimal solutions, and the values of their objective functions coincide), linear programming problems exhibit strict complementarity: in an appropriately formulated pair of primal and dual problems each primal variable has a dual counterpart, and there exists an optimal solution that is also a fully complementary pair, such that exactly one variable in each pair is positive and the other one is zero. The situation is much more complex for general linear conic problems. In a multifold system, where in place of the componentwise inequalities we have cone inclusions, it may happen that the relevant components of the primal and dual variables lie on the boundary of a corresponding cone. Moreover, there are several different definitions of complementarity that do not coincide and lead to different characterizations.

In some special cases, specifically when a homogeneous feasibility problem is considered, complementarity partition can be identified via an algorithm. The polyhedral (linear programming) case was considered in \cite{JavierNegar,Vavasis}, and it was shown in \cite{Terlaky} that it is possible to identify the partition of \cite{BonRam} via a variant of an interior point method for second-order cone programming. The complementarity partition is related to generalized condition measures for multifold conic systems, see \cite{CuckerPena}. Different approaches to complementarity partition for optimization problems  based on faces that contain primal and dual solutions can be found in  \cite{TuncelWolk,Yildirim}. These studies commonly treat the cone as a whole, treating partition as an intrinsic geometric notion. Our take on the problem is entirely different, because we impose a multifold structure of the cone explicitly and define the partition in terms of the block indices.

Recall that in the linear case complementarity partition has a simple and well-studied structure. Consider the primal-dual pair of linear programming problems
\begin{equation} \label{pb:PL}
\begin{split}
\min_{x\in \rr^n} \quad & c\transp x \notag \\
 \text{s.t.} \quad  & Ax \geq  b,   
\end{split} \quad \text{(LP)}\qquad \quad 
\begin{split}
\max_{y\in \rr^m} \quad & b \transp y \notag \\
 \text{s.t.} \quad  & A^\top y =  c,\\
 & y\geq 0.   
\end{split} \quad \text{(LD)}
\end{equation}
Here $A\in \R^{m\times n}$, $b\in \R^m$, $c\in \R^n$.
 The classical Goldman-Tucker theorem \cite{GoldmanTucker} (also see \cite[Proposition 17.16]{BGLSa} for modern treatment) states that when either one of the problems has a finite optimal value, there exists a unique partition $(B,N)$ of the index set $J=\{1,\dots, m\}$ that is of maximal complementarity. In other words, there is a  pair  $(\bar x, \bar y)$ of optimal solutions to (LP) and (LD) such that for $\bar s= A\bar x-b$ and $\bar y$ we have $\bar s_B>0$, $\bar y_N>0$, and moreover for any primal-dual optimal solution $(x,y)$, with $s: = A x-b $, it always holds that $ y_B = 0$, $ s_N = 0$ (here we have used the notation standard in linear programming literature: by $v_I$ we denote the vector of entries of $v = (v_1,\dots, v_m)\in \R^m$ with indices from a subset $I\subseteq J$ of $J = \{1,2,\dots, m\}$).  

The structure of optimal partition of monotone linear complementarity problems is also  well-known. Indeed, let $Q$ and $R$ be $n\times n$ matrices, and $h\in \R^n$. Recall that the linear complementarity problem 
$$
\begin{cases}
x\transp s = 0,\\
Qx+Rs = h,\\
x\geq 0, \quad s\geq 0,
\end{cases}
$$
is called monotone if $Qu+Rv = 0$ yields $u \transp v\geq 0$. For this type of problems the optimal partition is of the form $(B,N,T)$, where $T$ corresponds to the subset of indices for which both $s$ and $x$ components are zero for every solution; see, e.g., \cite{IPRT00,KMY89,MW94}.

\if{The optimal partition features in the study of monotone linear complementarity problems. Let $Q$ and $R$ be $n\times n$ matrices, and let $h\in \R^n$. The linear complementarity problem 
$$
\begin{cases}
x\transp s = 0,\\
Qx+Rs = h,\\
x\geq 0, \quad s\geq 0,
\end{cases}
$$
is called monotone if $Qu+Rv = 0$ yields $u \transp v\geq 0$. For monotone linear complementarity problems the optimal partition is of the form $(B,N,T)$, where $T$ corresponds to the subset of indices for which both $s$ and $x$ components are zero for every solution. 
}\fi

This linear complementarity partition is reminiscent of the situation encountered in multifold conic systems. When the system of linear inequalities is replaced by a product of general closed convex cones (as in the case of second-order cone programming for instance), the partition of indices becomes more complex since the component variables may end up on the boundary of the cones. 

In \cite{BonRam} a four-set partition was introduced for general multifold conic systems, based on regularity conditions involving normal cones. In \cite{PR2012} a refined six-set partition was suggested for homogeneous feasibility problems and a geometric characterization of such partition was obtained. Our goal is to generalize the latter partition to multifold conic optimization problems and compare the two partitions. 

We focus on the case when strong duality holds for the primal-dual pair of conic optimization problems. We note that this is a minimal assumption required for the notion of complementarity to make sense: the optimal solutions must exist, and this in particular can be guaranteed by the presence of Slater points in both primal and dual problems. In principle, the assumption of strong duality can always be guaranteed by performing a facial reduction preprocessing step if necessary; see \cite{FRedPolyh,PatakiStrong,WakiMuramatsu}. Another possible approach is to use Ramana Duals \cite{Ramana,StrongDuality}. Both approaches need to assume that the optimal value of the primal is attained. As pointed out by a reviewer, another way to remedy the failure of strong duality, and thus to ensure the existence of the optimal partition, is to cast the primal problem into a self-dual embedding format, for which the Slater condition always holds, see e.g., \cite{KRT1,KRT2}. Then, the optimal partition of the original problem, if exists, can be recovered from the optimal partition of the embedding problem.


We introduce the two types of partitions in Section~\ref{sec:partitions}, show the relations between them in Theorem~\ref{thm:RinC}, treat the special case of second-order cones in Proposition~\ref{lem:partsoc}  and provide an illustrative example. In Section~\ref{sec:hom}, we link the geometric relations of  \cite{PR2012} to the four-set partition of \cite{BonRam} in Theorem~\ref{thm:RDualChar}. We conclude the paper with a study of the relations between partitions of reformulated (lifted) problems in Section~\ref{sec-dual-socp-b}, strengthening some results of \cite{BonRam}.

Throughout the paper, we work in the finite-dimensional Euclidean setting of a real vector space $\R^n$ endowed with the standard Euclidean norm $\|x\| = \sqrt{x\transp x}$.


\section{Partitions for multifold conic optimization problems}\label{sec:partitions}

Consider a general linear conic optimization
problem with constraints in product form, i.e.,
\begin{align}
\label{pb:cop}
 \min_{x\in \rr^n} \quad &  c\transp x \notag \\
 \text{s.t.} \quad  &    A^j x - b^j  \in K_j \quad  \forall j\in J,\tag{P}
\end{align}
where $K_j$ is a closed convex regular cone in $\rr^{q_j}$, $q_j \in \nn$ for every $j\in J$, and $J = \{1,\dots, r\}$ is a finite index set (a regular cone has a nonempty interior and is pointed, i.e. it does not contain lines).
We set $K :=K_1\times\cdots\times K_r$, and define
$A=(A^1;\cdots;A^r)$ as the matrix
whose rows are those of $A^1$ to $A^r$, and $b := \vec(b^1,\ldots,b^r)$
so that \eqref{pb:cop} is equivalent to
$$
\min_{x\in \rr^n} \{ c\transp x; \; Ax-b \in K\}.
$$
The dual problem is 
\begin{align}
\label{pb:dcop}
\max_{y^1,\ldots,y^r} \quad & 
 \sum_{j=1}^r (b^j)\transp y^j\notag \\ 
\text{s.t.}\quad &  \sum_{j=1}^r (A^j)^\top y^j = c,\tag{D}\\
& y^j \in K_j^+ \quad \forall j\in J,\notag
\end{align}
where the (positive) polar of a set $C\subset \rr^m$
is defined as
$C^+ :=\{ y\in \rr^{m}; \; y\transp z \geq 0, \;
\forall  z \in C\}$.
Generally speaking, such primal-dual pairs of linear conic problems may have a nonzero duality gap (see \cite[Section~11.6]{Guler} for a detailed discussion and \cite[Section~3.2]{Renegar} for examples). The Slater condition (i.e., the existence of $\hat x$ such that $A^j \hat x - b^j  \in \Int K_j$,  $\forall j\in J$, for its primal version, or an analogous condition for the dual) guarantees the absence of the duality gap. Together with the bounded objective for either \eqref{pb:cop} or \eqref{pb:dcop} this also yields the existence of the optimal solution for the relevant primal or dual counterpart. Also note that zero duality gap alone does not guarantee the existence of optimal solutions for both primal and dual problems (see, e.g., \cite{K02}). So, we make the following standing assumption throughout the paper:
\ms

{\bf Standing assumption}: \emph{strong duality holds for the pair \eqref{pb:cop}-\eqref{pb:dcop}}, i.e, the optimal solution sets of \eqref{pb:cop} and \eqref{pb:dcop}  are nonempty, and the duality gap is zero. 
\ms

A pair $(x,y)$ of optimal solutions to primal and dual problems is characterized by the
complementarity system
\begin{align}
\label{pb:copos}
A^j x - b^j & \in K_j, \notag \\
 y^j & \in K_j^+,\notag \\
(y^j) \transp (A^j x - b^j)& =0 \quad  \forall  j\in  J,\tag{C}\\ 
A^\top y & = c.\notag
\end{align}
We denote by $S\eqref{pb:copos}$ the set of solutions for relations \eqref{pb:copos}. Observe that $S\eqref{pb:copos}$ is nonempty if and only if strong duality holds (which is our standing assumption). We also denote by $S\eqref{pb:cop}$ and $S\eqref{pb:dcop}$  the set of solutions to the problems \eqref{pb:cop} and \eqref{pb:dcop}, respectively, and by $F\eqref{pb:cop}$ and $F\eqref{pb:dcop}$ their respective feasible sets. Notice that strong duality implies the equality $S\eqref{pb:copos}=S\eqref{pb:cop}\times S\eqref{pb:dcop}$.


We say that {\em strict primal (resp. dual) feasibility} holds for
$j \in J$ if there exists
$x \in F\eqref{pb:cop}$ such that $A^j x-b^j \in \intt K_j$
(resp. $y\in F\eqref{pb:dcop}$ such that $y^j \in \intt K_j^+$). 

\if{

\begin{lema}
\label{strict-local}
Let $j$ be strictly primal (resp. dual) feasible.
Then the set
$\{y^j; y\in S\eqref{pb:dcop} \}$
(resp. $\{A^j x-b^j; x\in S\eqref{pb:cop} \}$)
is bounded.
\end{lema}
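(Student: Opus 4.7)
The plan is to exploit complementarity together with the standing assumption of zero duality gap, which (per the excerpt) is equivalent to $S\eqref{pb:copos}$ being nonempty and hence to the dual optimal value being attained and equal to the primal one. I treat the primal-feasible case; the dual case is symmetric.

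First I would fix a strictly primal feasible point $\bar x \in F\eqref{pb:cop}$, so that $\bar d^j := A^j \bar x - b^j \in \intt K_j$, while $\bar d^k := A^k \bar x - b^k \in K_k$ for all other $k$. For an arbitrary $y \in S\eqref{pb:dcop}$, I would compute
\[
\sum_{k=1}^{r} y^k \cdot (A^k \bar x - b^k) \;=\; \Big(\sum_{k=1}^r (A^k)^\top y^k\Big)\cdot \bar x \;-\; \sum_{k=1}^r b^k\cdot y^k \;=\; c\cdot \bar x - \val\eqref{pb:dcop},
\]
where the first equality uses the dual feasibility constraint and the second uses that $y$ is dual optimal. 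By zero duality gap, $\val\eqref{pb:dcop} = \val\eqref{pb:cop} \leq c\cdot \bar x$, so the right-hand side is a finite nonnegative constant $M := c\cdot \bar x - \val\eqref{pb:cop}$ independent of $y$.

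Next, since each term in the sum is nonnegative (each $y^k \in K_k^+$ and $\bar d^k \in K_k$), I would drop all but the $j$-th summand to obtain
\[
y^j \cdot \bar d^j \;\leq\; M.
\]
The key step is to turn this one-sided inner product bound into a norm bound on $y^j$ using the fact that $\bar d^j$ lies in the interior of $K_j$. I would choose $\eps > 0$ so that $\bar d^j + v \in K_j$ for every $v$ with $\|v\|\le \eps$; then for any $y^j \in K_j^+$ and any such $v$, $y^j \cdot (\bar d^j + v) \geq 0$, i.e. $y^j \cdot \bar d^j \geq -y^j \cdot v$. Taking the supremum over the closed $\eps$-ball yields $y^j \cdot \bar d^j \geq \eps \|y^j\|$, and combining with the previous inequality gives $\|y^j\| \leq M/\eps$, a bound independent of $y\in S\eqref{pb:dcop}$.

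The parenthetical dual statement follows by exchanging the roles of primal and dual: pick $\bar y \in F\eqref{pb:dcop}$ with $\bar y^j \in \intt K_j^+$, and for any $x \in S\eqref{pb:cop}$ compute $\sum_k \bar y^k \cdot (A^k x - b^k) = c\cdot x - \sum_k b^k\cdot \bar y^k = \val\eqref{pb:cop} - \sum_k b^k \cdot \bar y^k$, which is a constant $M'$ independent of $x$; then repeat the interior-point argument with $(K_j^+)^+ = K_j$ to bound $\|A^j x - b^j\|$. I do not anticipate a genuine obstacle here: the only delicate point is remembering that strict feasibility supplies the interior point needed to convert a one-sided inner product inequality into a two-sided norm bound, which is the standard recession-cone argument.
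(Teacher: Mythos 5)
Your argument is correct and is essentially the paper's own proof: strict feasibility at block $j$ gives $\eps\|y^j\|\le y^j\cdot(A^j\bar x-b^j)$, and summing the complementarity-type products against the dual constraint $\sum_k (A^k)^\top y^k=c$ bounds this by $c\cdot\bar x-\val\eqref{pb:dcop}$, with the symmetric argument for the dual case. The only cosmetic difference is that you invoke the zero duality gap to rewrite the constant, whereas the bound $c\cdot\bar x-\val\eqref{pb:dcop}$ already suffices, so the lemma in fact holds without assuming absence of a duality gap.
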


\begin{proof}
If $j$ is strictly primal feasible, there exists $x\in F\eqref{pb:cop}$ and
$\eps >0$ such that $s=A x-b$ satisfies $s^j + \eps B \subset
K_j$, or equivalently $\eps B \subset s^j - K_j$. Let $y \in
S\eqref{pb:dcop}$. Since $y^j\in K_j^+$, it follows that $\eps \|y^j\|
\leq y^j \transp s^j$. Using also $y^{j'} \transp s^{j'} \geq 0$, for
all $j'$, we get
$$
0= x\transp ( c-A^\top y) = c\transp x - y\transp Ax
  = c\transp x - b\transp y - y\transp s
  \leq c\transp x - b\transp y - \eps \|y^j\|.
$$
In other words,
$\eps \|y^j\| \leq c\transp x - b\transp y = c\transp x - \val\eqref{pb:dcop}$,
which gives the desired estimate.
The proof for the dual statement is similar.
\end{proof}

Note that the hypothesis of the above lemma does not imply the absence
of a duality gap.

}\fi

One says
(e.g., \cite[Def. 4.74]{BSbook})
that the {\em strict complementarity condition}
holds for problem \eqref{pb:cop} if there exists a pair
$(x,y)$ solution of the optimality system \eqref{pb:copos}, such that
$-y \in \ri N_K(Ax-b)$, where $N_K$ is the standard normal cone of convex
analysis (see \cite[Section~A.5.2]{JBHULem}). Since $K$ is a closed convex cone, we have for $s\in K$ that
\begin{equation}
\label{formul-nk}
N_K(s) = \left\{
\begin{array}{cc}
(-K^+) \cap s^\perp, & s\in \partial K,\\
\emptyset, & s\notin K,\\
\{0\}, & s\in \intt K,\\
\end{array}
\right.
\end{equation}
where $s^\bot$ denotes the orthogonal complement to the linear span of $s$.

For problems with constraints in the product form as in \eqref{pb:cop}, we introduce the notion of {\em componentwise strict complementarity}, which means that for each component $j$ there exists
a pair $(x,y) \in S\eqref{pb:copos}$, such that $-y^j \in \ri
N_{K_j}(A^jx-b^j)$. As we will see later, the two notions are equivalent.

In \cite{BonRam} the notion of optimal partition, well-known for linear programming and
monotone linear complementarity problems
(see, e.g., \cite[Section 18.2.4]{BGLSa}), was extended to our abstract framework.
Let 
\begin{align*}
B   &= \{j\,|\, \exists (x,y) \in S\eqref{pb:copos}\; \text{s.t.}\; A^j x-b^j \in \intt K_j\}\\
N   &= \{j\,|\, \exists (x,y) \in S\eqref{pb:copos}\; \text{s.t.}\; y^j \in \intt K_j^+\}\\
R^0 &= \{j|\, \exists (x,y) \in S\eqref{pb:copos}\; \text{s.t.}\; -y^j \in  \ri N_{K_j}(A^j x - b^j)\}.
\end{align*}
Note that $B\cup N\subseteq R^{0}$. 
Define $R:= R^0\setminus (B\cup N)$, $T:=J\setminus R^0$.
It was shown in \cite[Lemma 3]{BonRam} that if $S\eqref{pb:copos}$ is not empty, the partition $(B,N,R,T)$ is a disjoint partition of the index set $J$.



\begin{figure}[ht]
	{\centering \includegraphics{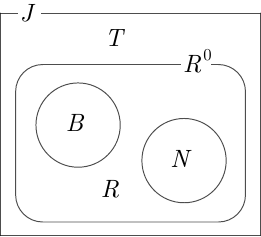}\\}
	\caption{The four-partition $(B,N,R,T)$.}
	\label{fig:BTNR}
\end{figure}

\begin{defi}
Any pair $(x,y) \in S\eqref{pb:copos}$ satisfying the relations below is
said to be of maximal complementarity:
\begin{equation}
\label{pb:copri}
\left\{ \ba{lll}
{\rm (i)} \;
A^j x-b^j \in \intt K_j, \; \;\forall j\in B,\\
{\rm (ii)}  \;
y^j \in \intt K_j^+, \; \;\forall j\in N,
\\
{\rm (iii)}  \;
-y^j \in \ri N_{K_j}(A^j x-b^j), \; \;\forall j\in R.
\ea \right.
\end{equation}
\end{defi}

For each $j\in B\cup N\cup R$ let $(x(j),y(j)) \in S\eqref{pb:copos}$ be such that
$$ 
\begin{cases}
A^j x^j(j)-b^j \in \intt K_j & \text{if}\quad j \in B,\\
y^j(j) \in \intt K_j & \text{if}\quad j \in N,\\
-y^j(j) \in \ri N_{K_j}(A^j x(j)-b^j) & \text{if}\quad j \in R,
\end{cases}
$$
where $x^j(j)$ and $y^j(j)$ stand for the block $j$ of vectors $x(j)$ and $y(j)$, respectively.

We define
\begin{equation}\label{eq:maxcomconstr}
\xh := (|B|+|R|)^{-1}\sum_{j\in B\cup R} x(j); \quad
\yh := (|N|+|R|))^{-1}\sum_{j\in N\cup R} y(j).
\end{equation}

\if{

Let us state some properties of the set of
maximal complementarity solutions.
We need a preliminary lemma.

\begin{lema}
\label{mcopos.l4a}
Let $K$ be a closed convex cone.
Let $s^i \in K$, for $i=1,2$,
$- y^1 \in N_K(s^1)$, and $- y^2 \in \ri N_K(s^2)$.
Given $\alpha \in ]0,1[$, set
$(s,y) := \alpha (s^1,y^1) + (1-\alpha) (s^2,y^2)$.
If $-y\in N_K(s)$, then $-y\in \ri N_K(s)$.
\end{lema}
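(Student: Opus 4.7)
The plan is to work with the face-lattice structure of $K$ and $K^+$. Writing $F$ for the smallest face of $K$ containing $s$ (so that $s \in \ri F$) and similarly $F_1, F_2$ for the smallest faces containing $s^1, s^2$, and denoting the complementary face operation by $G \mapsto G^\Delta := K^+ \cap G^\perp$, I would use the standard identification $N_K(s) = -F^\Delta$ together with the analogous identities for the $s^i$. If $s \in \intt K$ then $N_K(s) = \{0\}$ is trivially its own relative interior, so I assume $s \in \partial K$.

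First I would observe that because $\alpha \in\, ]0,1[$ and $s = \alpha s^1 + (1-\alpha) s^2$ is a strict convex combination lying in the face $F$, the defining property of a face forces $s^1, s^2 \in F$; hence $F_i \subseteq F$ for $i=1,2$, and the inclusion-reversing nature of the complementary operation gives $F^\Delta \subseteq F_i^\Delta$.

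Next comes the central step. The hypothesis $-y \in N_K(s)$ reads $y \in F^\Delta$; since $F^\Delta$ is itself a face of $K^+$ (it is the slice of $K^+$ on which the nonnegative linear functional $\langle \cdot, s\rangle$ vanishes) and $y^1, y^2 \in K^+$, the face property applied to the strict convex combination $y = \alpha y^1 + (1-\alpha) y^2 \in F^\Delta$ yields $y^1, y^2 \in F^\Delta$. In particular $y^2 \in F^\Delta \cap \ri F_2^\Delta$. Because $F^\Delta$ is a face of $K^+$ contained in $F_2^\Delta$, it is also a face of $F_2^\Delta$, and a proper face of a convex set is disjoint from the relative interior; hence this intersection forces the facial equality $F^\Delta = F_2^\Delta$.

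With $F^\Delta = F_2^\Delta$ in hand, we have $y^2 \in \ri F^\Delta$ while $y^1 \in F^\Delta$, and the standard line-segment lemma for relative interiors gives $y = \alpha y^1 + (1-\alpha) y^2 \in \ri F^\Delta$, that is, $-y \in \ri N_K(s)$. The main obstacle, as expected, is the upgrade encoded in that facial equality: passing from $-y \in N_K(s)$ to $-y \in \ri N_K(s)$ requires tracking both the primal and dual face lattices simultaneously and exploiting that precisely one of the two input normals, namely $-y^2$, is assumed to lie in the relative interior.
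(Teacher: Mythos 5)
Your argument is correct, but it proceeds by a genuinely different mechanism than the paper's proof. The paper works directly with the prolongation characterisation of the relative interior of the cone $-N_K(s)=K^+\cap s^\perp$: it first notes that for a strict convex combination one has $N_K(s)=N_K(s^1)\cap N_K(s^2)$, so any direction $z$ in $-N_K(s)$ also lies in $-N_K(s^2)$; then, since $-y^2\in\ri N_K(s^2)$, one can step slightly past $y^2$ against $z$ and stay in $K^+$, and the identity $y-\eps z=\alpha y^1+(1-\alpha)(y^2-\eps' z)$ with $\eps=(1-\alpha)\eps'$ transfers this to $y$, which is exactly the required membership. You instead argue through the face lattice: identifying $N_K(s)$ with minus the complementary face $F^\Delta$ of the minimal face $F$ containing $s$, you use the face property twice (on the primal side to get $F_1,F_2\subseteq F$ and hence $F^\Delta\subseteq F_i^\Delta$, and on the dual side, via the hypothesis $-y\in N_K(s)$, to force $y^1,y^2\in F^\Delta$), then deduce the facial equality $F^\Delta=F_2^\Delta$ from $y^2\in F^\Delta\cap\ri F_2^\Delta$, and conclude with the line-segment principle. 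Both routes are elementary and sound; your primal-side step is the facial analogue of the paper's normal-cone intersection formula, but your central dual-side step is different and in fact yields more structural information, namely $N_K(s)=N_K(s^2)$ under the hypothesis $-y\in N_K(s)$, whereas the paper only ever uses the inclusion $N_K(s)\subseteq N_K(s^2)$ and never identifies any coincidence of faces. The price you pay is invoking the facial machinery (minimal faces, complementary faces, face-of-a-face, proper faces missing the relative interior), while the paper's computation is self-contained apart from the standard extension characterisation of $\ri$; the benefit is a cleaner conceptual picture and a proof that makes the role of the single relative-interior hypothesis on $-y^2$ transparent.
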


\begin{proof}
Since $-N_K(s)=K^+\cap s^\perp$, we have that
$-y \in \ri N_K(s)$ iff, for all $z\in N_K(s)$,
$y \pm \eps z \in K^+$ for small enough $\eps>0$.
As $K^+$ is a cone, $y + \eps z \in K^+$
always holds. Therefore we have to prove that for
$z\in N_K(s)$,
$y - \eps z \in K^+$ for small enough $\eps>0$.
Using
$N_K(s) = N_K(s^1) \cap N_K(s^2)$,
obtain $z \in N_K(s^2)$, and hence,
$y^2 - \eps' z \in K^+$ for some $\eps'>0$.
Let $\eps:= (1-\alpha)\eps'$. Then
$
y - \eps z = \alpha y^1 + (1-\alpha)(y^2 - \eps' z)
$
belongs to $K^+$.
The conclusion follows.
\end{proof}
}\fi

It was shown in \cite[Lemma 7]{BonRam} that the pair $(\xh,\yh)$ defined in \eqref{eq:maxcomconstr} is of maximal complementarity, and that any pair $(x,y) \in \ri S\eqref{pb:cop} \times \ri S\eqref{pb:dcop}$ is of maximal complementarity. An immediate consequence of this result is that componentwise strict complementarity yields strict complementarity (observing that $\ri N_K(Ax-b) = \ri N_{K_1}(A^1 x- b^1)\times \ri N_{K_2}(A^2 x- b^2)\times \cdots \times\ri N_{K_r}(A^r x- b^r)$), therefore the two notions are equivalent.

In a separate development \cite{PR2012}, a different idea was used to define a partition for a homogeneous feasibility problem. In the case when $b=0$ and $c=0$ the pair \eqref{pb:cop}-\eqref{pb:dcop} is reduced to a pair of homogeneous feasibility problems \eqref{pb:fcop}-\eqref{pb:dfcop} below. In this case the primal problem consists of finding an $x$ such that  
\begin{equation}
\label{pb:fcop}\tag{FP}
 A^j x  \in K_j \quad \forall j\in J,
\end{equation}
and the dual problem consists of finding $y$ satisfying
\begin{equation}
\label{pb:dfcop}\tag{FD}
\sum_{j=1}^r (A^j)^\top y^j = 0, \qquad 
y^j \in K_j^+ \quad \forall  j\in J.
\end{equation}
Either one of the problems has a strictly interior solution if and only if the other one only has a zero solution; the most interesting situation in terms of the partition indices occurs when both problems have boundary solutions. We can assign an index $j\in J$  to the set $B^0$ if all solutions to the dual problem \eqref{pb:dfcop} have zero $j$-th component, and to $N^0$ if for every primal solution to the problem \eqref{pb:fcop} the $j$-th component is zero. The remaining two partition sets are the intersection $O = B^0\cap N^0$ and the complement $C = J\setminus B^0\cup N^0$. In what follows we extend this definition to optimization problems. Define the index subsets $B^0$  and $N^0$:
\begin{align*}
& B^0 = \{j\,|\, \forall (x,y) \in S\eqref{pb:copos}\quad   y^j = 0\},\\ 
& N^0 = \{j\,|\, \forall (x,y) \in S\eqref{pb:copos}\quad  A^j x-b^j = 0\},
\end{align*}
and define the following four sets:
$$
B' = B^0\setminus(N^0\cup B), \quad  N' = N^0\setminus(B^0\cup N), \quad O = B^0\cap N^0, \quad C = J\setminus(B^0\cup N^0).
$$
It follows immediately from the complementarity conditions of \eqref{pb:copos} that $B\subset B^0$ and $N\subset N^0$, and that $B'\cap N' = \emptyset $. Therefore, it is not difficult to observe that the sets $B,N,B',N',O,C$ form a disjoint partition of the index set $J$. To avoid confusion, we will refer to this partition as the six-partition, whereas the four-set partition introduced before will be referred to as the four-partition.

\begin{figure}[ht]
	{\centering \includegraphics{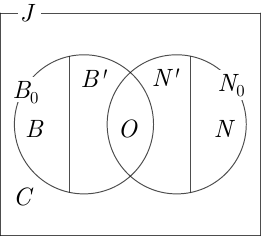}\\}
	\caption{The six-partition.}
	\label{fig:six-partition}
\end{figure}

\begin{teo}\label{thm:RinC} The following relations between the four- and six-partitions hold.
\begin{equation}\label{eq:RinC}
T \supset B'\cup N' \cup O = (N^0\cup B^0)\setminus (N\cup B);\quad R \subset C=J\setminus(B^0\cup N^0);
\end{equation}
\end{teo}
\begin{proof} The first relation follows directly from the fact that the six-partition is indeed a partition of set $J$ and $R\subset C$. Thus, it only remains to show that $R\subset C$. We argue by contradiction and suppose the existence of an index $j^*\in R\setminus C$. Since $C=J\setminus(B^0\cup N^0)$, $j^* \in B^0\cup N^0$.

Since $j^*\in R$, there exists a solution  $(x(j^*),y(j^*)) \in S\eqref{pb:copos}$ with 
\begin{equation}\label{eq:pf01}
- y^{j^*}(j^*) \in \ri N_{K_{j^*}}(A^{j^*} x(j^*) - b^{j^*}).
\end{equation} 

First assume that $j^* \in B^0\cap R$. This yields $y_{j^*}(j^*)=0$, and hence 
$$
0\in \ri N_{K_{j^*}}(A^{j^*} x(j^*) - b^{j^*}).
$$
This is only possible when either $N_{K_{j^*}}(A^{j^*} x(j^*) - b^{j^*})=\{0\}$ or $N_{K_{j^*}}(A^{j^*} x(j^*) - b^{j^*})$ contains lines. In the first case $A^{j^*} x(j^*) - b^{j^*}\in \intt K_{j^*}$, and $j^*\in B$, and that contradicts the definition of $R$. The second case is impossible by our assumption that all cones $K_j$, $j\in J$ have a nonempty interior.

Now assume that $j^* \in N^0\cap R$. We have $A^{j^*} x(j^*) - b^{j^*}=0$, hence, \eqref{eq:pf01} yields
$$
-y^{j^*}(j^*)\in \ri N_{K_{j^*}}(A^{j^*} x(j^*) - b^{j^*}) = \ri N_{K_{j^*}}(0) = \ri (-K^+_{j^*}) = -\intt K^+_{j^*},
$$ 
which yields $j^*\in N$, and contradicts the fact that $j^*$ is in $R$. We therefore conclude that 
\begin{equation}\label{eq:pf2}
R\subset C = J\setminus(B^0\cup N^0).
\end{equation}
\end{proof}

We demonstrate the relations of Theorem~\ref{thm:RinC} using a Venn diagram in Fig.~\ref{fig:venn}.
\begin{figure}[ht]
{\centering \includegraphics{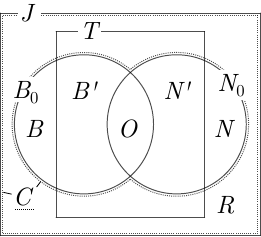}\\}
\caption{The relations between index sets (see Theorem~\ref{thm:RinC}).}
\label{fig:venn}
\end{figure}

%
%

We next show that in the case of second-order cone programming the complementarity partition has special properties. Recall that a second-order cone is a product of Lorentz cones (closed convex cones  $L_n$ in $\rr^{n+1}$ for some $n\geq 1$ defined as follows,
$$
L_n = \{x=(x_0,\bar x)\in \rr\times \rr^n\,|\, x_0\geq \|\bar x\|\},
$$
i.e., such cones are the epigraphs of the Euclidean norm).
For consistency, we also let $L_0:= \R_+ = \{x\in \R\,|\, x\geq 0\}$. Second-order cone programs are of type \eqref{pb:cop} and \eqref{pb:dcop}, where each cone $K_j$, for $j\in J$, is a Lorentz cone. Notice that linear programming case can be cast as a special case of second-order cone programming when $K_j=L_0$ for all $j\in J$.
\begin{prop}\label{lem:partsoc} If for every $j\in J$ the cone $K_j$ is a Lorentz cone, then 
\begin{equation}\label{eq:RisC}
 R = C=J\setminus(B^0\cup N^0).
\end{equation} 
\end{prop}
\begin{proof}
Let $j\in C$. Then, there exists a solution  $(x(j),y(j)) \in S\eqref{pb:copos}$ with both 
$y_j(j)$ and $s = A^{j} x(j) - b^{j}$ nonzero. 
Since $y_j(j)\perp s$, we have $y_j(j)\in (K^+_j)\cap s^\perp = -N_{K_j}(s)$. Observe that for a second-order cone the 
normal cone  $N_{K_j}(s)$ is one-dimensional provided $s\neq 0$, hence, $\ri N_{K_j}(s) = N_{K_j}(s)\setminus \{0\}\ni y_j(j)$, and
therefore $j\in R$. We have then shown that $C\subset R$. The reverse inclusion follows from Theorem~\ref{thm:RinC}.
\end{proof}

We next show a simple example of mixed polyhedral and Lorentz cones in which it can happen that $R\neq C$. 
\begin{ej} We consider a feasibility problem given by $b=0$, $c=0$ and
$$
A^\top  = \left[
\begin{array}{cccccccc}
0 &   1 & 0 & 0& 0& 1 &  0 & 0 \\
0 & 0 & 1 & 0 & -1 &  1 &  1 & 1 \\
1 & 0 & 0 & 1 &   0 &  0 &  0 & 1 
\end{array}
\right].
$$

First, we consider 
$$
K = L_3\times \rr_+^1\times \rr_+^1\times\rr_+^1\times \rr_+^1.
$$
Solving the problem analytically, we obtain all possible solutions as the parametric family
$$
x = (0,0,\alpha)^\top , \;  Ax = (\alpha, 0 , 0, \alpha, 0, 0, 0, \alpha)^\top ,\; y = (\beta, 0, 0, -\beta,\gamma, 0, \gamma, 0)^\top , 
$$
where $\alpha, \beta,\gamma\geq 0.$ We thus obtain that $1\in C$, $2,4\in N$, $3\in O$, $5\in B$, and $R = \{1\}$, since the first cone is a Lorentz cone. Hence, as expected from Proposition \ref{lem:partsoc}, in this case $R=C$.
However, if we let $K = L_3\times \rr^1_+\times \rr_+^3$, we directly deduce that $1,3 \in C$, $2\in N$. Now observe that 
$$
N_{K_3}(A_3x) = N_{\rr^3_+}(0,0,\alpha) = -\rr^2_+\times\{0\},
$$ 
provided $\alpha >0$, and hence
$$
\ri N_{K_3}(A_3x) = -\intt \rr_+^2 \times \{0\}.
$$
Hence, since the only possible solution is $y_3 = (0,\gamma,0)$, with $\gamma\geq 0$, it clearly follows that $-y_3\notin \ri N_{K_3}(A_3x)$. Hence, $3\notin R$.
\end{ej}

\if{

{\tt MAKE NOTATION CONSISTENT FOR THE SOCs}

{\tt Ve It is interesting to find the conditions under which $C=R$

ONE ANSWER IS WHEN WE CAN ENSURE THAT $S\eqref{pb:copos}=S\eqref{pb:cop}\times S\eqref{pb:dcop}$}.

}\fi

\section{Homogeneous feasibility problems}\label{sec:hom}

In this section we focus on homogeneous feasibility problems \eqref{pb:cop}--\eqref{pb:dcop} introduced earlier in Section~\ref{sec:partitions}. Observe that in this case the strong duality property coincides with solvability of problems (FP) and (FD). The relevant complementarity conditions can be obtained from \eqref{pb:copos},
\begin{align}
\label{pb:fcopos}
A^j x & \in K_j, \notag \\
 y^j & \in K_j^+,\notag \\
(y^j) \transp A^j x & =0 \quad  \forall  j\in  J,\tag{FC}\\ 
A^\top y & = 0.\notag
\end{align}

In \cite{PR2012} a dual characterization of the homogeneous six-partition was obtained.
\begin{teo}[{\cite[Theorem 1]{PR2012}}]\label{thm:BNDualChar} The sets $B$, $N$, $B^0$ and $N^0$ can be described as follows.
\begin{align*}
& B = \{j\in J \,|\,  A_j^\top ( K_j^+\setminus \{0\}) \cap \Lin (\overline{A^\top  K^+})= \emptyset\},\\
& N = \{j\in J \,|\, \ri A_j^\top  K_j^+ \cap \Lin (A^\top  K^+)\neq \emptyset\},\\
& B^0 = \{j\in J \,|\,  A_j^\top ( K_j^+\setminus \{0\}) \cap \Lin (A^\top  K^+)= \emptyset\},\\
& N^0 = \{j\in J \,|\, \ri A_j^\top  K_j^+ \cap \Lin (\overline{A^\top  K^+})\neq \emptyset\}.
\end{align*}
Here by $\Lin C$ we denote the lineality space of a convex set $C$.
\end{teo}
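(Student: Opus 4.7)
Since $b=0$ and $c=0$, taking $y=0$ shows that $(x,0)\in S\eqref{pb:fcopos}$ for every $x\in F:=\{x:Ax\in K\}$, and symmetrically $(0,y)\in S\eqref{pb:fcopos}$ for every $y\in\Lambda:=\{y\in K^+:A^\top y=0\}$. This decouples the problem: $B^0$ and $N$ become conditions on $\Lambda$ alone, while $B$ and $N^0$ become conditions on $F$ alone. The key duality identity I would exploit is $F=(A^\top K^+)^+$, whence $F^\perp=\Lin(F^+)=\Lin(\overline{A^\top K^+})$. Since $F^+$ is automatically closed while $A^\top K^+$ need not be, this is precisely why the closure appears in the descriptions of $B$ and $N^0$ (the $F$-side conditions) but not in those of $B^0$ and $N$ (the $\Lambda$-side conditions).

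For the $B^0$ and $N$ characterizations I would first prove the following extension lemma: a given $y^j\in K_j^+$ occurs as the $j$-th block of some $y\in\Lambda$ if and only if $A_j^\top y^j\in\Lin(A^\top K^+)$. Necessity is immediate from $A_j^\top y^j=-\sum_{i\neq j}A_i^\top y^i\in -A^\top K^+$ together with the trivial inclusion $A_j^\top y^j\in A^\top K^+$. For sufficiency, write $A_j^\top y^j=-A^\top z$ with $z\in K^+$ and set $\tilde y^j:=y^j+z^j$ and $\tilde y^i:=z^i$ for $i\neq j$; then $\tilde y\in\Lambda$, its $j$-th block is nonzero whenever $y^j\neq 0$, and it lies in $\intt K_j^+$ whenever $y^j\in\intt K_j^+$. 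Applying this with $y^j\in K_j^+\setminus\{0\}$ gives the $B^0$ characterization, and applying it with $y^j\in\ri K_j^+$ together with the standard identity $\ri(A_j^\top K_j^+)=A_j^\top(\ri K_j^+)$ gives the $N$ characterization.

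For $B$ and $N^0$ I would use primal separation. For $N^0$: one direction is that $A^j x=0$ on $F$ implies $A_j^\top y^j\in F^\perp$ for every $y^j$ (take inner product with any $y^j$), in particular for some $y^j\in\ri K_j^+$. For the converse, if $y^j\in\ri K_j^+$ satisfies $A_j^\top y^j\in F^\perp$, then for $x\in F$ one has $y^j\cdot A^j x=0$ and $A^j x\in K_j$; for pointed $K_j$ one has $\ri K_j^+=\intt K_j^+$ and $y^j\cdot s>0$ on $K_j\setminus\{0\}$, forcing $A^j x=0$. For $B$: if $j\notin B$ then the convex cone $A^j F$ is disjoint from $\intt K_j$, and a standard separation argument yields a nonzero $y^j\in K_j^+$ with $y^j\cdot s=0$ on $A^j F$, equivalently $A_j^\top y^j\in F^\perp=\Lin(\overline{A^\top K^+})$. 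Conversely, if such $y^j$ existed together with $x\in F$ for which $A^j x\in\intt K_j$, then $y^j\cdot A^j x>0$ would contradict $A_j^\top y^j\cdot x=0$.

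The main obstacle is keeping careful track of the closure: $\Lin(A^\top K^+)$ and $\Lin(\overline{A^\top K^+})$ coincide when $A^\top K^+$ happens to be closed but may differ in general, and threading this distinction through the four memberships is what produces the asymmetric statement of the theorem. A secondary technical point is that the arguments implicitly require each component cone $K_j$ to be pointed with nonempty interior, so that $\intt K_j^+=\ri K_j^+$ is nonempty and supplies the strict positivity needed both for the separation step in the $B$ characterization and for the strict complementarity step in the $N^0$ characterization.
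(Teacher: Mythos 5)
The paper itself contains no proof of Theorem~\ref{thm:BNDualChar}: it is imported verbatim from \cite[Theorem~1]{PR2012}, so there is nothing in-paper to compare your argument against line by line; I can only judge it on its own merits, and on those it is essentially correct and is the natural self-contained route. The decoupling observation is valid (in fact $S\eqref{pb:fcopos}$ is the full product of the primal cone $F=\{x:Ax\in K\}$ and the dual cone $\Lambda=\{y\in K^+:A^\top y=0\}$, since $y\cdot Ax=x\cdot A^\top y=0$ forces each nonnegative term $y^j\cdot A^jx$ to vanish), the identities $F=(A^\top K^+)^+$ and $F^\perp=\Lin(F^+)=\Lin(\overline{A^\top K^+})$ are correct and do explain why the closure appears exactly on the $B$ and $N^0$ side, the separation argument for $B$ (separate the cone $A^jF$ from $\intt K_j$, conclude the separating functional lies in $K_j^+\setminus\{0\}$ and annihilates $A^jF$) is sound, and the strict-positivity argument for $N^0$ is fine. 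Your closing caveat is not merely ``secondary'': pointedness plus nonempty interior of each $K_j$ is genuinely indispensable for the statement as written (with a half-space component $K_j$ one has $\intt K_j^+=\emptyset$, so $j\notin N$ by definition while the right-hand side for $N$ can hold; similarly $K_j=\rr^{q_j}$ breaks the $N^0$ description), so flagging it, in line with the regularity assumptions of the source, is exactly right.

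One local defect should be repaired. Your ``extension lemma'' as stated --- $y^j\in K_j^+$ occurs as the $j$-th block of some $y\in\Lambda$ if and only if $A_j^\top y^j\in\Lin(A^\top K^+)$ --- is false in the ``if'' direction, and your own construction does not prove it: the vector $\tilde y$ you build has $j$-th block $y^j+z^j$, not $y^j$. For a concrete failure take two blocks with $K_1=\rr_+^2$, $A_1^\top=(1,\,-1)$, $K_2=\rr_+$, $A^2=0$; then $\Lin(A^\top K^+)=\rr$, so $y^1=(1,0)^\top$ satisfies the right-hand condition, yet every $y\in\Lambda$ has first block of the form $(t,t)^\top$. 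Fortunately what you actually use downstream is only the weaker statement your construction does prove, namely that some $y\in\Lambda$ exists whose $j$-th block is nonzero whenever $y^j\neq 0$ (this uses pointedness of $K_j^+$, i.e.\ $\intt K_j\neq\emptyset$) and lies in $\intt K_j^+$ whenever $y^j\in\intt K_j^+$; with the lemma restated in that form, the characterizations of $B^0$ and $N$ go through unchanged, and the overall proof stands.
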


It appears that in the case of feasibility problem the set $R$ has a similar dual property. The next result extends the relations of Theorem~\ref{thm:BNDualChar}.

\begin{teo}\label{thm:RDualChar} In the case of a homogeneous feasibility problem 
$$
R^0 \subseteq \tilde R  := \{j\in J \,|\,   A_j^\top  K_j^+\cap \Lin ({A^\top  K^+}) =  A_j^\top  K_j^+\cap \Lin (\overline{A^\top  K^+})\}.
$$
\end{teo}
\begin{proof} We argue by contradiction and assume that, for some pair of problems \eqref{pb:fcop}-\eqref{pb:dfcop}, there exists $j\in R^0\setminus \tilde R$.
	
Consider the linear subspaces 
$$
 L = \{y\,|\, A_j^\top  y \in \Lin (A^\top  K^+)\}, \qquad \bar L = \{y\,|\, A_j^\top  y \in \Lin (\overline{A^\top  K^+})\},
$$
and let 
$$
F = L\cap K^+_j,\quad  \bar F = \bar L\cap K^+_j.
$$
Here observe that $F\subsetneq \bar F$: indeed, since $j\notin \tilde R$, the sets 
$$
 A_j^\top  K_j^+\cap \Lin ({A^\top  K^+}) \; \text{ and } \;   A_j^\top K_j^+\cap \Lin (\overline{A^\top  K^+})
$$
are different, and hence there exists some $p\in A^\top  K_j^+ $ such that $p\in \Lin (\overline{A^\top  K^+})\setminus \Lin ({A^\top  K^+}) $. Subsequently there exists $u\in K_j^+$ such that $A\transp u = p$. For every such $u$ we have $u\in \bar L \setminus L$, yielding $F\neq \bar F$.

Pick an arbitrary solution $x$ to \eqref{pb:fcop} and any $v\in \bar F$. Since $v\in \bar F$, we have 
$A_j^\top  v\in \Lin(\overline{A^\top  K^+})$, hence, $-A_j^\top  v\in \Lin(\overline{A^\top  K^+})\subseteq \overline{A^\top  K^+}$. 
Then there exist sequences $\{v^+_k\}$, $\{v_k^-\}$ such that $v_k^-,v_k^+\in K^+$, $A^\top  v_k^-, A^\top  v_k^+\in A^\top  K^+$, $A^\top  v_k^\pm \to \pm A^\top _j v$. Since $A^\top  v_k^-,A^\top  v_k^+ \in A^\top  K^+$, and since $x$ is a solution (i.e. $Ax\in K$), we have for some $y\in K^+$, $A\transp v_k^- = A\transp y$
$$
 (Ax)^\top v_k^{-} = x \transp A^\top  v_k^- = x A^\top y =  (Ax)^\top y \geq 0,  
$$
and analogously $0\leq  (Ax)^\top v_k^+$ for all $k$. Hence, passing to the limit, we have $(A_j x)\transp v = 0$. Therefore, 
$\bar F \subseteq (A_j x)^\perp$. By definition, $\bar F \subseteq K^+_j$, hence,
$$
\bar F \subseteq K^+_j \cap (A_jx)^\perp.
$$

Since $j\in R^0$, there exists a pair of feasible solutions $(x,y)$ to \eqref{pb:fcop}-\eqref{pb:dfcop}  such that 
\begin{equation}\label{eq:yinriN01}
y_j \in -\ri N_{K_j}(A_j x), \; \text{ where } N_{K_j}(A_j x)  = -K^+_j \cap (A_jx)^\perp.
\end{equation}
Let $\bar y \in \rr^n$ be such that $\bar y_i = 0 $ for all $i\neq j$, and $\bar y_j= y_j$. Observe that $\bar y\in K^+$, hence, $A^\top _j y_j = A^\top  \bar y \in A^\top K^+$, and since $A^\top  y = 0$,  we have $-A^\top_j y_j \in A^\top K^+$, hence, $A^\top _j y_j \in \Lin(AK^+)$, and therefore $y_j\in F$.  

Since $y_j \in F\subsetneq \bar F \subseteq  K^+_j \cap (A_jx)^\perp$, $F = \bar F \cap L$, and all three sets $F$, $\bar F$ and $K^+_j \cap (A_jx)^\perp$ are cones, their affine hulls are linear subspaces, moreover, we have the following representations
$$
\Aff F = L_1, \quad \Aff \bar F = L_2, \quad \Aff (K^+_j \cap (A_jx)^\perp) = L_3,
$$
where $L_1, L_2, L_3$ are linear subspaces, $L_1 \subsetneq L_2\subseteq  L_3$. Observe that $L_1 \subsetneq L_2$ follows from the fact that $ F = \bar F\cap L$, where $L$ is a linear subspace and $F\neq \bar F$. Also note that since $F = L\cap \bar F$, and $\bar F = \bar L \cap (K^+\cap (A_j x)^\perp)$, we have $F = \bar F \cap L_1$, $\bar F = L_2\cap (K^+\cap (A_j x)^\perp)$.

From \eqref{eq:yinriN01} we deduce that there exists $\varepsilon>0$ such that $(y_j+ B_\varepsilon)\cap L_3 \subseteq K^+_j \cap (A_jx)^\perp$. Observe that since $\Aff \bar F = L_1\oplus(L_2\cap L_1^\perp)$, there exists a unit vector $u$ in $L_1^\perp\cap L_2$ such that  $y_j +\varepsilon u\in \bar F\subseteq L_2\subseteq L_3$, therefore $y_j-\varepsilon u \in L_3$, and since $\pm \varepsilon u \in B_\varepsilon$, we have $y_j\pm \varepsilon u \in L_2\cap( K^+_j \cap (A_jx)^\perp)$, hence, $y_j\pm \varepsilon u\in \bar F$, as $\bar F = \bar L_2\cap (K^+ \cap (A_jx)^\perp )$. 

Therefore, $A^\top _j(y_j\pm \varepsilon u) \in A^\top K^+$. Since $A_j^\top y_j \in \Lin(A\transp K^+)$, we have $-A^\top _j y_j \in A^\top K^+$,  and therefore  we deduce that $-A_j^\top( y_j\pm \varepsilon u) \in A\transp K^+$, and hence $A_j^\top (y_j\pm u)\in \Lin(A\transp K)$, which means that $y_j\pm \varepsilon u \in F$, which contradicts the construction of $u$. Therefore, our assumption that there exists $j\in R^0\setminus \tilde R$ is wrong, and hence $R^0\subseteq \tilde R$.

\end{proof}

In the following example we show that the reverse inclusion in $R^0 \subseteq \tilde R$ does not always hold.
\begin{ej} Consider a feasibility problem given by $b=0$, $c=0$, 
$$
A^\top  = \left[
\begin{array}{cccccccc}
0 &   1 & 0 & 0& 0& 1& 0 &  0  \\
0 & 0 & 1 & 0 & -1 &0 &  1 &  0  \\
1 & 0 & 0 & 1 &  0  &0 &  0 &  1 
\end{array}
\right]
$$
\if{
$$
A^\top  = \left[
\begin{array}{ccc}
0 &   0 & 1\\
1 & 0& 0 \\
0& 1 &  0 \\
0 & 0 & 1\\
1 & 0& 0 \\
0& 1 &  0 \\
0 & 0 & 1\\
0 &  -1 & 0 \end{array}
\right]
$$
}\fi
and 
$$
K = L_3\times \rr_+^1\times \rr_+^3.
$$

Solving the problem analytically, we obtain all possible solutions as the parametric family
$$
x = (0,0,\alpha)^\top , \;  Ax = (\alpha, 0 , 0, \alpha, 0, 0, 0,  \alpha)^\top ,\; y = (\beta, 0, 0, -\beta, \gamma, 0,\gamma, 0)^\top , 
$$
where $\alpha, \beta,\gamma\geq 0.$ We thus obtain that $1\in R$, $2\in N$, and $3\in T$. Consequently,  $R^0=\{1,2\}$. However, for this concrete example it is possible to prove that $A^\top K^+$ is closed. Then, from the definition of  $\tilde R$, it follows that $\tilde R=\{1,2,3\}$, which thus turns to be strictly larger than $R^0$. Indeed, let us denote by $u\in L_3$,  $w\in \rr_+^1$ and $v\in \rr_+^3$ the components of $y \in K$. Hence,
\begin{align*}
A^\top K^+ 
& =\{ (u_1+v_1, u_2+v_2 -w, u_0 + u_3 + v_3)^\top : u\in L_3 , \, w\geq 0,\, v_i\geq 0 \, \forall i=1,2,3\} \\
& = \{ (x_1,x_2,x_3)^\top : x_2 \in \R, \, x_3\geq 0, \, x_1 + x_3\geq 0 \}.
\end{align*}
In the computation above we have used that: i) $v_2-w$ can be any real number when $v_2$ and $w$ are nonnegative; 
ii) the third component $x_3 = u_0 + u_3 + v_3$ is nonnegative because $u\in L_3$ and $v_3\geq 0$, and 
iii) the first component $x_1=u_1+v_1$ can be negative because of $u_1$. However, since $u\in L_3$, $u_0$ is always greater or equal than $-u_1$. This necessarily implies that $x_1 + x_3\geq 0$.
We thus conclude that $\tilde R=\{1,2,3\}$, and consequently $R^{0}\neq \tilde R$. 
\end{ej}

\section{Equivalent optimization problems}\label{sec-dual-socp-b}

We now introduce another problem related to \eqref{pb:cop}.
Let $\calk= \calk_1 \times \cdots \times \calk_r$ be another finite family of
regular closed convex cones in $\rr^{r_j}$, $j\in J$, and $M^j$ be
$r_j\times q_j$ matrices such that
\begin{equation}
\label{copos1}
s^j\in K_j \;\; \text{ iff } \;\; M^j s^j \in \calk_j, \; j=1,\ldots,J.
\end{equation}
The latter is inspired by the relation between second-order cones and the cone of the positive semidefinite matrices (denoted by $\Sym^{n+1}_+$). Indeed, consider, for some $n\geq 1$, the cones $L_n$ and $\Sym^{n+1}_+$. For $s=(s_0,\sb)\in \rr\times\rr^n$ it is easy to check that 
\begin{equation}
\label{eq:equivArw} s\in L_n  \;\;\text{ iff } \;\; 
\Arw(s) := 
\begin{pmatrix} s_0 & \sb^\top
                   \\ \sb & s_{0} I_n
\end{pmatrix} \in \SS^{n+1}_+.
\end{equation}
Linear function $\Arw(\cdot)$ is the  \emph{arrow function} and it has several useful properties that will be exploited in what follows.

Let $M = (M^1;\cdots;M^r)$ be the matrix whose rows are those of
$M^j$. Then, \eqref{pb:cop} is equivalent to the linear conic problem
\if {
\begin{equation}
\label{pb:mcop}
\Min_{x\in \rr^n}
 c\transp x\, ;\, M(A x - b )\in \calk,
\end{equation}
or equivalently
}\fi
\begin{equation}
\label{pb:mcopa}\tag{MP}
\Min_{x\in \rr^n}
 c\transp x\, \quad \text{ s.t. } M^j(A^j x - b^j )\in \calk_j, \; j\in J,
\end{equation}
whose dual is
\begin{equation}
\label{pb:dmcop}\tag{MD}
\Max_{z\in \calk^+}
\sum_{j=1}^r (b^j)\transp (M^j)^\top z^j \quad \text{ s.t. } 
\sum_{j=1}^r (A^j)^\top (M^j)^\top z^j = c; \; \; z^j\in \calk_j^+,
\; j\in J.
\end{equation}
If the strong duality holds for this problem, an optimal pair
$(x,y)$ of the primal and dual problems is characterized by the
optimality system
\begin{equation}
\label{pb:mcopos}\tag{MC}
\left\{ \ba{lll}
 M^j(A^j x - b^j) \in \calk_j,  \; \;
z^j\in \calk_j^+, \;\; (z^j) \transp M^j (A^j x - b^j)=0,
\; j\in J;
\\
\sum_{j=1}^r (A^j)^\top (M^j)^\top z^j = c.
\ea \right.
\end{equation}
In what follows, let $(B_{\ref{pb:cop}},N_{\ref{pb:cop}},R_{\ref{pb:cop}},T_{\ref{pb:cop}})$
denotes the optimal partition of \eqref{pb:cop},
and adopt a similar convention for (MP).

Our goal in this section is to extend the results of \cite{BonRam} to the six-partition. This permits to end this section by showing that relevant pair of second-order and semidefinite programming problems has the same partitions. 
%
%
%
Before proceeding in this direction we need to recall some useful claims proved in Lemmas 8 and 9 of \cite{BonRam}.
\begin{lema}
	\label{mcopos.l1} The following relations hold: 
	
	{\rm (i)} $
	S\eqref{pb:cop} = S\eqref{pb:mcopa}$, $M^\top \calk^+ \subset K^+$, and $M^\top
	S\eqref{pb:dmcop}  \subset S\eqref{pb:dcop}$. 
	
	{\rm (ii)} If $M^\top\calk^+$ is
	closed, then $M^\top \calk^+ = K^+$ and $M^\top S\eqref{pb:dmcop} =
	S\eqref{pb:dcop}$. 
	
	{\rm (iii)} Closedness of $M^\top\calk^+$ holds if
	$M^\top$ is coercive on $\calk^+$, i.e., if there exists $\gamma
	>0$ such that $\| M^\top z \| \geq \gamma \|z\|$ for all $z\in
	\calk^+$. In that case, $S\eqref{pb:dmcop} $ is bounded iff $S\eqref{pb:dcop}$
	is bounded.
	
	{\rm (iv)} Assume in addition  that the linear mapping defined by the matrix $M$ is one to one. Then,
	$M^\top \intt \calk^+ = \intt K^+$
	and \\
	$M^\top \intt S\eqref{pb:dmcop} = \intt S\eqref{pb:dcop}$.
	Moreover,   for all $s\in K$,
	$M^\top \ri (\calk^+ \cap (Ms)^\perp) \subset \ri (K^+\cap s^\perp)$.
\end{lema}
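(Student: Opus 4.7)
My plan is to prove the four claims in order, since each relies on its predecessors. For (i), the equality $S\eqref{pb:cop}=S(MP)$ is immediate from \eqref{copos1}: the feasible sets coincide and the objective $c\cdot x$ is unchanged. To see $M^\top\calk^+\subset K^+$, take $z\in\calk^+$ and $s\in K$; then $Ms\in\calk$ by \eqref{copos1}, so $(M^\top z)\cdot s=z\cdot Ms\geq 0$. The inclusion $M^\top S(MD)\subset S\eqref{pb:dcop}$ follows from the fact that $M^\top$ transports \eqref{pb:mcopos} to \eqref{pb:copos}: given $(x,z)$ satisfying \eqref{pb:mcopos}, setting $y=M^\top z$ yields $y\in K^+$, $A^\top y=c$, and $y^j\cdot(A^j x-b^j)=z^j\cdot M^j(A^j x-b^j)=0$, so $(x,y)\in S\eqref{pb:copos}$.

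For (ii), the crucial step is the polar identity $(M^\top\calk^+)^+=K$: indeed, $s\in(M^\top\calk^+)^+$ iff $z\cdot Ms\geq 0$ for all $z\in\calk^+$, iff $Ms\in(\calk^+)^+=\calk$, iff $s\in K$ by \eqref{copos1}. If $M^\top\calk^+$ is closed, the bipolar theorem yields $M^\top\calk^+=K^+$, which upgrades the inclusion in (i) to the equality $M^\top S(MD)=S\eqref{pb:dcop}$ (every dual-optimal $y\in K^+$ admits a preimage $z\in\calk^+$, and the complementarity conditions transfer back by the same computation as in (i)). For (iii), coercivity yields closedness in the standard way: if $z_n\in\calk^+$ and $M^\top z_n\to y$, then $\|z_n\|\leq\gamma^{-1}\|M^\top z_n\|$ is bounded, so some subsequence converges to $z\in\calk^+$ with $M^\top z=y$. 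The boundedness equivalence then follows from (ii): continuity of $M^\top$ transfers boundedness from $S(MD)$ to $S\eqref{pb:dcop}$, while the coercivity estimate $\|z\|\leq\gamma^{-1}\|M^\top z\|$ transfers it back from $S\eqref{pb:dcop}$ to $S(MD)$.

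For (iv), the forward inclusion $M^\top\intt\calk^+\subset\intt K^+$ uses injectivity of $M$: given $z\in\intt\calk^+$, for any $s\in K\setminus\{0\}$ we have $Ms\in\calk\setminus\{0\}$ (by \eqref{copos1} and injectivity of $M$), so $(M^\top z)\cdot s=z\cdot Ms>0$, placing $M^\top z\in\intt K^+$ via the strict dual characterization. The reverse inclusion, and the analogous equality $M^\top\intt S(MD)=\intt S\eqref{pb:dcop}$, follow from the standard convex-analytic identity $L(\ri C)=\ri L(C)$ for linear $L$ and convex $C$, applied with $L=M^\top$ and $C=\calk^+$ or $C=S(MD)$ respectively, together with the full-dimensionality of the cones (which identifies $\intt$ with $\ri$). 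The final inclusion $M^\top\ri(\calk^+\cap(Ms)^\perp)\subset\ri(K^+\cap s^\perp)$ is handled analogously: one first verifies that $M^\top$ maps $\calk^+\cap(Ms)^\perp$ into $K^+\cap s^\perp$ (from $M^\top z\cdot s=z\cdot Ms=0$), and then invokes $L(\ri C)=\ri L(C)$ on the face $\calk^+\cap(Ms)^\perp$. I expect the main obstacle to be careful bookkeeping of the interior/relative-interior distinction in this last step; in particular, identifying $\ri M^\top(\calk^+\cap(Ms)^\perp)$ with $\ri(K^+\cap s^\perp)$ requires the surjectivity from (ii) together with the observation that any $y'\in K^+\cap s^\perp$ written as $M^\top z'$ with $z'\in\calk^+$ automatically satisfies $z'\cdot Ms=y'\cdot s=0$, so that $z'$ lies in $\calk^+\cap(Ms)^\perp$.
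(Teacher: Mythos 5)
The paper offers no argument for this lemma at all --- its ``proof'' is the citation to Lemmas 8 and 9 of \cite{BonRam} --- so there is nothing to match your proof against line by line; what you write is the standard argument, and parts (i) (first two claims), (ii) and (iii) are correct as you give them (the bipolar computation $(M^\top\calk^+)^+=K$, the transfer of feasibility and complementarity through $M^\top$, and the coercivity-compactness argument are exactly what is needed). One step, however, is looser than it looks: in (i) you obtain $M^\top S(MD)\subset S\eqref{pb:dcop}$ by transporting \eqref{pb:mcopos} to \eqref{pb:copos}. That argument shows the inclusion for those $z$ that occur in some solution of \eqref{pb:mcopos}; identifying this set with $S(MD)$ tacitly assumes zero duality gap (with primal attainment) for the lifted pair \eqref{pb:mcopa}--\eqref{pb:dmcop}, which is not established at that point --- the paper's standing assumption gives it only for \eqref{pb:cop}--\eqref{pb:dcop}. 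The clean route is to observe that $M^\top$ maps (MD)-feasible points to (D)-feasible points with the same cost, so the inclusion amounts to $\val(MD)=\val(D)$; under the closedness hypothesis of (ii) this follows from your surjectivity argument, but in (i), stated without closedness, it has to be taken as part of the standing convention (the same looseness is already present in the source).

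In (iv) two smaller remarks. The identity $L(\ri C)=\ri L(C)$ by itself does not yield the final inclusion, because $\ri$ is not monotone under set inclusion; you do repair this correctly in your last sentence by proving the set equality $M^\top\bigl(\calk^+\cap(Ms)^\perp\bigr)=K^+\cap s^\perp$ via the surjectivity of (ii), which in fact gives equality of the relative interiors --- slightly more than the stated inclusion. Finally, $\intt S(MD)$ and $\intt S\eqref{pb:dcop}$ must be read as relative interiors (solution sets generically have empty interior, cf.\ Lemma~\ref{mcopos.l4b}), and the identification $\intt=\ri$ for the cones requires $\intt K^+_j\neq\emptyset$ and $\intt\calk_j^+\neq\emptyset$, i.e.\ pointedness of $K_j$ and $\calk_j$; so your ``full-dimensionality'' remark should be attached to the polar cones, not to the solution sets. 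With these adjustments your proof is complete and is, in substance, the argument the cited Lemmas 8--9 of \cite{BonRam} encapsulate.
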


The next proposition strengthens \cite[Lemma 10]{BonRam}  by replacing the inclusions $N_{\ref{pb:cop}} \supseteq N_{MP}$ and $T_{\ref{pb:cop}} \subseteq T_{MP}$ with equalities, under the same assumptions. It states conditions for which  the
four-partition of problems \eqref{pb:cop} and \eqref{pb:mcopa} coincide.
\begin{prop}
\label{mcopos.l5}
Assume that $M^\top \calk^+$ is closed, that
$M$ is one to one, and that
\begin{equation}
\label{eq-lema.l5-1}
\text{ For all } s^j \in K_j, \;
M^j s^j \in \bd \calk_j \text{ iff } s^j \in \bd K_j.
\end{equation}
Then, the
four-partition of problems \eqref{pb:cop} and \eqref{pb:mcopa} coincide, that is
\begin{equation}
\label{eq:partition-coincide}
\left\{\begin{array}{lll}
B_{\ref{pb:cop}} = B_{MP}, &
N_{\ref{pb:cop}} = N_{MP},
\\
R_{\ref{pb:cop}} =R_{MP}, &
T_{\ref{pb:cop}} = T_{MP}.
\end{array}\right.
\end{equation}
In particular, the strict complementarity condition holds for
\eqref{pb:cop} iff it holds for \eqref{pb:mcopa}.
\end{prop}

\begin{proof}
As mentioned above, due to Lemma 10 of \cite{BonRam} it only remains to prove that $R_{\ref{pb:cop}} \subseteq R_{MP}$. Let  $j\in R_{\ref{pb:cop}}$. This implies the existence of $(x,y)\in S\eqref{pb:copos}$ such that  $s^j:=A^jx^j-b^j\in \bd K_j\setminus\{0\}$ and $y^j(j)\in \bd K_j^+\setminus\{0\}$.
It follows from Lemma \ref{mcopos.l1}, Parts (i) and (ii), that $x \in S\eqref{pb:mcopa}$ and that there exists $z \in S \eqref{pb:dmcop} $ such that $y=M^\top z $, respectively. In particular, $y^j=(M^j)^\top z^j $ and 
\begin{equation}\label{eq:aux}
0=(s^j) \transp y^j =(s^j) \transp (M^j)^\top z^j =M^j (s^j)\transp z^j.
\end{equation}
Moreover, from \eqref{eq-lema.l5-1} and the fact that $M$ is one to one, we deduce that  $M^js^j$ is a nonzero element of $\bd \K_j$. This in particular implies that $z^j \not\in \Int (\K_j^+)$ (because otherwise  \eqref{eq:aux} yields $M^js^j=0$). Finally, since $(M^j)^\top z^j =y^j \neq 0$ it directly follows that $z^j \neq 0$. Summarizing, we have proved that $(x,z) \in S\eqref{pb:mcopos}$, and $M^js^j$ and $z^j$ are nonzero elements of $\bd \K_j$ and $\bd \K_j^+$, respectively. This means that $j\in R_{MP}$, which concludes our proof.
\end{proof}

We now state  the equivalence between six-partitions of problems  \eqref{pb:cop} and  \eqref{pb:mcopa}.

\begin{teo}\label{lem:17} Under the assumptions of Proposition~\ref{mcopos.l5}, 
	\begin{equation}
		\label{eq:lem17}
		\left\{\begin{array}{lll}
			B_{\ref{pb:cop}} = B_{MP}, &
			N_{\ref{pb:cop}} = N_{MP},
			\\
			N^0_{\ref{pb:cop}} = N^0_{MP}, &
			B^0_{\ref{pb:cop}} \supseteq B^0_{MP}.
		\end{array}\right.
	\end{equation}
	Additionally, suppose that $z=0$ is the only element in  $\K^{+}$ such that $M^\top z =0 $. Then, $B^0_{\ref{pb:cop}} = B^0_{MP}$. The last relation in particular implies that the six-partition of problems  \eqref{pb:cop} and  \eqref{pb:mcopa} coincide.  
\end{teo}



\begin{proof}
Relations $B_{\ref{pb:cop}} = B_{MP}$ and 
			$N_{\ref{pb:cop}} = N_{MP}$ were established in Proposition \ref{mcopos.l5}.
From Lemma~\ref{mcopos.l1}~(i) we have $M$S(\ref{pb:cop}) = S\eqref{pb:mcopa}. Hence, the inclusion $N^0_{\ref{pb:cop}} \subset N^0_{MP}$ is obvious. Moreover, since $M$ is one to one, $M_j(A_j x-b_j)=0$ yields  $A_j x-b_j=0$, and consequently, the equality $N^0_{\ref{pb:cop}} = N^0_{MP}$.

On the other hand, relation $  B^0_{MP}\subseteq B^0_{\ref{pb:cop}}$ follows directly from Lemma~\ref{mcopos.l1}(ii). Indeed, if for every 
$(x,z)\in S$(MC) we have $z_j=0$, then $y_j = M^\top_j(z_j )=0$, obtaining that $y_j =0$ for all $y\in S$(D).
Now, for the opposite inclusion, assume that $j\in B^0_P$. Then every solution $(x,z)\in S\eqref{pb:mcopa}$ satisfies 
$M_j^\top  z_j =0$. By assumption  this yields $z_j =0$. We thus conclude that  $ B^0_{\ref{pb:cop}}  \subseteq B^0_{MP}$. The result follows.
\end{proof}

\if{

The proof of Lemma~\ref{lem:17} relies on two technical claims that we prove next.

\begin{prop}\label{lem:16} Assume that $M^\top \calk^+$ is closed. Then 
$$
B^0_{\ref{pb:cop}} \supset B^0_{MP}.
$$
If, in addition, for all $z\in \K^{+}$ such that $M^\top z =0 $ yields $z=0$, then 
$$
B^0_{\ref{pb:cop}} = B^0_{MP}.
$$
\end{prop}
\begin{proof}
The relation $B^0_{\ref{pb:cop}} \supset B^0_{MP}$ follows directly from Lemma~\ref{mcopos.l1}(ii). If for every 
$(x,z)\in S(MC)$ we have $z_j=0$, then $y_j = M^\top_j(z_j )=0$.

Now assume that $j\in B^0_{\eqref{pb:cop}}$. Then every solution $(x,z)\in S(MC)$ satisfies 
$M_j^\top  z_j =0$. By assumption  this yields $z_j =0$.
\end{proof}

\begin{prop}\label{lem:18} We have 
$$
N^0_{\ref{pb:cop}} \subset N^0_{MP}.
$$
If, in addition, $M$ is one to one, then
$$
N^0_{\ref{pb:cop}} = N^0_{MP}.
$$
\end{prop}
\begin{proof} From Lemma~\ref{mcopos.l1}~(i) we have $MS(\ref{pb:cop}) = S(MP)$, hence, the first relation $N^0_{\ref{pb:cop}} \subset N^0_{MP}$ is obvious. Since $M$ is one to one, $M_j(A_j x-b_j)=0$ yields  $A_j x-b_j=0$, hence, the equality $N^0_{\ref{pb:cop}} = N^0_{MP}$.
\end{proof}

\begin{proof}[Proof of Lemma~\ref{lem:17}]
Follows directly from Propositions~\ref{lem:16} and \ref{lem:18}.
\end{proof}
}\fi

We show in the next example that the assumptions of  Theorem~\ref{lem:17} are crucial for the equality between partitions.
\begin{ej}
	
	Consider a pair of feasibility problems
	$$
	Ax \in K, \quad (P) \qquad A^\top y = 0, y\in K^+,\quad (D)
	$$
	where
	$$
	K = K^+ = L_2\times \rr_+^1,
	$$
	and $L_2$ is the three-dimensional Lorentz cone.
	Let 
	$$
	A^\top  = \left[
	\begin{array}{cccc}
	0 & 1 & 0 & 1\\
	1 & 0 & 1 & 0
	\end{array}
	\right].
	$$
	We can solve this feasibility problem directly and obtain the parametric family of solutions
	$$
	x^* = (0,\alpha)^\top , \alpha \geq 0, \text{ with } Ax^* = (\alpha, 0,\alpha, 0)^\top ; \qquad 
	y^* = (\beta, 0, -\beta, 0)^\top , \beta \geq 0.  
	$$
	It is not difficult to see that for the first index the solutions lie on the boundary of the cones, hence, $1\in C=R$.
	For the second index, both primal and dual components are zero, hence, $2\in O$.

	Now we transform our problem into a higher-dimensional one. We let 
	$$
	M_1 = I_3  = \left[\begin{array}{ccc}1 & 0 & 0 \\ 0 & 1 & 0 \\ 0 &0 & 1\end{array}\right], \quad M_2 = \left[\begin{array}{c} 1 \\ 1\end{array}\right].
	$$
	For the transformed problem, we let $\K_1 = K_1 = L_2$, $\K_2 = L_1$.
	Observe that for every $y\in \rr$ we have 
	$$
	M_2 y = (y,y)^\top ,
	$$
	and hence $M_2 y \in \K_2= L_1$ iff $y\in K_2=\rr_+^1$.
	Further,
	$$
	\A_1 = M_1 A_1 = A_1 = \left[
	\begin{array}{ccc}
	0& 1 \\
	1 & 0 \\
	0 & 1
	\end{array}
	\right], \quad \A_2  = M_2 A_2 = \left[
	\begin{array}{ccc}
	1 \\
	1 
	\end{array}
	\right] [1\; 0] = \left[
	\begin{array}{cc}
	1& 0 \\
	1 & 0 
	\end{array}
	\right].
	$$
	Let $\A = [\A_1;\A_2]$, then we can write the transformed feasibility problem as
	$$
	\A u \in \K, \quad (P') \qquad \A^\top v = 0, v\in \K^+.\quad (D')
	$$
	Solving the problem directly, we obtain the family of optimal solutions
	$$
	u = (\lambda, 0), \lambda \geq 0 \text{ with } \A u  = (\lambda, 0 , \lambda, 0, 0); \qquad v= (\mu,0,-\mu, \gamma, -\gamma), \mu, \gamma \geq 0.
	$$
	Therefore, we conclude that $1_M\in C$, $2_M \in N^0 \setminus (B^0 \cup N) = N'$, and hence the partition has changed. 
	
	The situation above occurs because $M_2$ does not satisfy that $M_2^\top z =0 $ yields $z=0$ for all $z\in \K^{+}=L_1$. Indeed, it is enough to take $z=(1,-1)^\top$ to check this hypothesis fails.
\end{ej}

The next corollary  follows directly from Theorem~\ref{lem:17} and definition of the index set $C$. This result provides alternative hypotheses to those in Proposition \ref{mcopos.l5} in order to ensure that the four-partition of problems (P) and (MP) coincide.

\begin{cor}\label{lem:19} Suppose that the assumptions of Lemma~\ref{mcopos.l5} are fulfilled. Suppose also that for all $z\in \K^{+}$ such that $M^\top z =0 $ yields $z=0$,  $R_{\ref{pb:cop}}=C_{\ref{pb:cop}}$ and $R_{MP}=C_{MP}$. Then 
	$$
	R_{\ref{pb:cop}} = R_{MP}, \quad T_{\ref{pb:cop}} = T_{MP}.
	$$ 
\end{cor}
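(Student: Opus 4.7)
The plan is to reduce the problem to establishing that the sets $B^{0}$ and $N^{0}$ agree between \eqref{pb:cop} and $(MP)$, and then exploit the two $R=C$ hypotheses to transfer this agreement to $R$, whence to $T$ by a partition/complement argument.

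First I would apply Lemma~\ref{lem:17}: since the hypotheses of Lemma~\ref{mcopos.l5} hold and the injectivity hypothesis on $M^{\top}$ restricted to $\K^{+}$ is in force, Lemma~\ref{lem:17} yields the four coincidences $B_{\ref{pb:cop}}=B_{MP}$, $N_{\ref{pb:cop}}=N_{MP}$, $B^{0}_{\ref{pb:cop}}=B^{0}_{MP}$ and $N^{0}_{\ref{pb:cop}}=N^{0}_{MP}$. By the very definition of the six-partition, $C=J\setminus(B^{0}\cup N^{0})$, so the agreement of the zero-sets gives immediately $C_{\ref{pb:cop}}=C_{MP}$.

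Next I would chain the equalities $R_{\ref{pb:cop}}=C_{\ref{pb:cop}}=C_{MP}=R_{MP}$, where the outer equalities use the two extra hypotheses and the middle one uses the step above. This proves the first conclusion. For the second, Lemma~\ref{mcopos.l3} tells us that $(B,N,R,T)$ is a disjoint partition of $J$ for each problem; once $B$, $N$ and $R$ are shown to coincide across \eqref{pb:cop} and $(MP)$, the complement $T=J\setminus(B\cup N\cup R)$ must coincide as well, giving $T_{\ref{pb:cop}}=T_{MP}$.

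I do not anticipate any serious obstacle, since the corollary is essentially a matter of gluing Lemmas~\ref{mcopos.l3} and \ref{lem:17} together with the two $R=C$ hypotheses. The one point worth checking carefully is the alignment of hypotheses for Lemma~\ref{lem:17}: one must verify that the closedness of $M^{\top}\K^{+}$, the injectivity of $M$, the boundary condition \eqref{eq-lema.l5-1}, and the injectivity of $M^{\top}$ on $\K^{+}$ are all available, which is exactly what the statement of the corollary grants.
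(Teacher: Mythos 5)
Your proof is correct and follows essentially the same route as the paper: invoke Lemma~\ref{lem:17} to get $B$, $N$, $B^0$, $N^0$ agreeing, hence $C_{\ref{pb:cop}}=C_{MP}$ by the definition $C=J\setminus(B^0\cup N^0)$, then chain through the two $R=C$ hypotheses and finish $T$ by complementation in the disjoint partition. The paper's proof is just the one-line version of exactly this argument, so nothing further is needed.
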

%

\if{
\begin{lema}\label{lem:equivSOCSDP}
	Problems $(LSOCP)$ and $(LSDP)$ have the same four and six-partition. 
\end{lema}

}\fi

\medspace


We end this section by applying previous results to the case of a linear  second-order cone program and its semidefinite programming equivalent representation constructed via \eqref{eq:equivArw}.
Indeed, those  problems fits  in the framework given by problems (P) and (MP) provided, for all $j\in J$, we consider the following sets
$K_j:=L_{n_j}$, $\calk_j := \SS^{n_j+1}_+$,
and $M^j s^j = \Arw^j s^j$. Here, the arrow function $\Arw^j: \rr^{n_j+1}\to \SS^{n_j+1}$
was defined in \eqref{eq:equivArw}.

In what follows consider a generic nonnegative integer value $n$ and omit index $j$ from the arrow function.
Note that we can write
\begin{equation}
\label{propm}
\Arw(s) = (s_0 - \| \sb \|) I_{n+1} +
\begin{pmatrix} \| \sb \| & \sb^\top
                   \\ \sb & \| \sb \| I_n
\end{pmatrix}.
\end{equation}
This shows that for $s\in L_{n}\setminus \{0\}$,
$\Arw(s)$ is of rank $n$ iff
$s \in \bd L_{n}$, and of rank $ n+1$ otherwise.
In particular,
$\Arw \bd L_{n} \subset  \bd \SS^{n+1}_+$, and
$\Arw \intt L_{n} \subset  \intt \SS^{n+1}_+$.
Therefore
\eqref{eq-lema.l5-1}
holds. Also $\Arw$ is clearly one to one.

Let us decompose any matrix $Y\in \SS^{n+1}$ as follows
\begin{equation}
\label{eq:decomposition_Y}
Y=\left(\begin{matrix} Y_{00} & \y0^\top\\ \y0 & \bar Y \end{matrix}\right) ,
\end{equation}
where $ Y_{00}\in \rr$, $ \y0 \in \rr^n$ and $ \bar Y \in \SS^n$. We
note that for any $s\in \rr^{n+1}$ we get 
\begin{equation}\label{eq:operation_Arw}
\Arw(s)\cdot Y= s_{0}\Tr(Y) +2 \sb\cdot\y0.
\end{equation}
It follows that
$\Arw^\top:\SS^{n+1}\rar \rr^{n+1}$
is nothing but
\begin{equation}
\label{eq:defarwt}
\mbox{$\Arw^\top$}
Y:= \left( \begin{matrix}  \Tr( Y) \\ 2 \y0  \end{matrix}\right).
\end{equation}
Consequently
\begin{equation}
\label{eq:defarwt2}
M^\top (Y^1,\ldots,Y^r) = \vec
\left(
\left( \begin{matrix}  \Tr( Y^1) \\ 2 \y0^1  \end{matrix}\right),
\ldots,
\left( \begin{matrix}  \Tr( Y^r) \\ 2 \y0^r  \end{matrix}\right)
\right).
\end{equation}
Then, $M^{\top}\calk$ is clearly closed. Moreover, since $\Arw^\top Y=0$ implies $\Tr(Y) =0$, when $Y\in \SS^{n+1}_{+}$ the latter yields $Y=0$. Hence, all the assumptions of Proposition \ref{mcopos.l5},  and of Theorem \ref{lem:17}, are fulfilled. We thus conclude that
a second-order program and its respective semidefinite programming representation (given by \eqref{eq:equivArw}) have the same four- and six-partitions. 


\section{Conclusions}

Our results demonstrate what one cannot expect to obtain simple geometric characterizations of the kind that exist for the four-partition of Bonnans and Ram\'irez  \cite{BonRam} for the practically useful partition of Pe\~na and Roshchina  \cite{PR2012} in a general case of a multifold conic system. However, in the case of second-order cone programs the complement $T\setminus C$ is empty (cf. Fig.~\ref{fig:venn}), and $R=C$ (see Proposition~\ref{lem:partsoc}). This is in contrast to Example~4 in which we lumped three copies of the nonnegative orthant together, deliberately disrespecting the natural multifold structure of the problem.

In the case of a homogeneous feasibility problem, we have obtained a geometric condition satisfied by the set $R^0$ that originates from the six-partition of Bonnans and Ram\'irez. It would be valuable to generalize this result (stated in Theorem~\ref{thm:RDualChar}) as well as the original geometric result (in Theorem~\ref{thm:BNDualChar}) to optimization problems. 

Also, motivated by a question from one of the referees that we were not able to answer, we would like to ask the reader if there is an example of a primal-dual pair of problems for which $B'\cup N'\neq \emptyset$, but $C=\emptyset$.

Finally, we have shown that the canonical transformation of second-order cone programs into a positive semidefinite program preserves the complementarity partitions. It is an interesting question to determine other specific classes of problems for which this is also the case. 

\if{

\begin{prop}
\label{applic}
{\rm (i)}
We have that $y$ is solution of $(LSOCP^*)$ iff there exists
$z$ solution of $(LSDP^*)$ such that $y=M^\top z$.
{\rm (ii)}
One of these dual problems has a bounded set of solutions iff the
other one has the same property.
{\rm (iii)}
One of these dual problems has an interior feasible point
iff the other one has the same property.
{\rm (iv)}
Problems $(LSOCP)$ and $(LSDP)$ have the same optimal partition.
\end{prop}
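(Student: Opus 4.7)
The plan is to apply Lemma~\ref{mcopos.l5} and Lemma~\ref{lem:17} to the specific choice $K_j = L_{m_j+1}$, $\calk_j = \SS^{m_j+1}_+$, and $M^j = \Arw^j$. Essentially all of the work is verifying the four hypotheses: (a) $M^\top \calk^+$ is closed; (b) $M$ is one-to-one; (c) the boundary-preservation condition \eqref{eq-lema.l5-1}; and (d) the trivial-kernel condition that $z \in \calk^+$ with $M^\top z = 0$ forces $z = 0$. Several of these are already observed in the discussion preceding the statement, so my proof would mostly consolidate these observations and then quote the two lemmas.

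First I would verify (b), which is immediate from \eqref{eq:equivArw}: given $\Arw(s)$ one reads off $s_0$ from the $(0,0)$-entry and $\bar s$ from the remainder of the first column, so $\Arw$ (and hence $M = \diag(\Arw^1, \dots, \Arw^r)$) is injective. Next, (c) follows from the decomposition \eqref{propm}: for $s \in L_{m+1}\setminus \{0\}$, the matrix $\Arw(s)$ has rank $m$ when $s_0 = \|\bar s\|$ and full rank $m+1$ when $s_0 > \|\bar s\|$, so $\Arw(s) \in \bd \SS^{m+1}_+$ iff $s \in \bd L_{m+1}$. For (d) I would use formula \eqref{eq:defarwt}: if $Y \in \SS^{m+1}_+$ with $\Arw^\top Y = 0$, then in particular $\Tr(Y) = 0$, and a PSD matrix of zero trace must vanish; this in turn yields closedness in (a), since $\Arw^\top$ restricted to $\SS^{m+1}_+$ is injective, hence coercive on the closed cone $\calk^+$ (so Lemma~\ref{mcopos.l1}(iii) applies), or one may simply note that (a) is explicit in the discussion above the proposition.

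With (a)--(d) in hand, Lemma~\ref{mcopos.l5} yields the equality of the four-partitions
\[
B_{\ref{pb:cop}} = B_{MP},\quad N_{\ref{pb:cop}} = N_{MP},\quad R_{\ref{pb:cop}} = R_{MP},\quad T_{\ref{pb:cop}} = T_{MP},
\]
and Lemma~\ref{lem:17}, together with condition (d), yields the additional equalities $B^0_{\ref{pb:cop}} = B^0_{MP}$ and $N^0_{\ref{pb:cop}} = N^0_{MP}$. Since the auxiliary six-partition sets $B', N', O, C$ are defined purely in terms of $B, N, B^0, N^0$, equality of the six-partition follows at once.

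I do not expect any real obstacle here: the content of the proposition is that the Arrow lift behaves well enough that none of the index sets can move under the reformulation, and the genuinely nontrivial work (closedness of $M^\top \calk^+$ and the rank/boundary correspondence) has been carried out in the surrounding text. The only mildly delicate point is condition (d), where one must remember that the trace of a PSD matrix vanishes only for the zero matrix; once this is noted, the proof reduces to two citations.
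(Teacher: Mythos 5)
Your argument addresses only part (iv) of the proposition. The statement has four claims, and (i)--(iii) are never argued: (i) asserts that the solution sets of the two dual problems correspond via $y=M^\top z$, which is exactly Lemma~\ref{mcopos.l1}(i)--(ii) once closedness of $M^\top\calk^+$ is known; (ii) asserts equivalence of boundedness of the dual solution sets, which requires invoking Lemma~\ref{mcopos.l1}(iii), i.e.\ coercivity of $\Arw^\top$ on $\SS^{m+1}_+$ (a direct estimate is available: for $Y\in\SS^{m+1}_+$ one has $\|\Arw^\top Y\|\geq \Tr(Y)\geq \|Y\|$, since the trace of a PSD matrix dominates its Frobenius norm); (iii) asserts equivalence of strict dual feasibility, which follows from the interior statement $M^\top \intt\,\calk^+=\intt K^+$ in Lemma~\ref{mcopos.l1}(iv) (using that $\Arw$ is one-to-one), transferring strictly feasible points in both directions. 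You have in fact verified all the hypotheses these parts need, so completing them is a matter of a few lines, but as written your proof establishes only the partition claim and therefore does not prove the proposition. A smaller point: your phrase ``$\Arw^\top$ restricted to $\SS^{m+1}_+$ is injective, hence coercive'' is not quite right --- $\Arw^\top$ is not injective on the PSD cone (two diagonal PSD matrices with the same trace and zero first column have the same image); what you need, and what your condition (d) gives, is that its kernel meets $\SS^{m+1}_+$ only at the origin, which yields coercivity by homogeneity and compactness of the unit sphere intersected with the cone (or by the trace estimate above).

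Concerning (iv), your route differs from the paper's own proof of this proposition, which was written against the weaker version of Lemma~\ref{mcopos.l5} (Lemma 10 of \cite{BonRam}): there, only $B$, $N$ equality and the inclusion $R_{LSOCP}\supset R_{LSDP}$ come from the lemma, and the reverse inclusion is proved by hand, using \eqref{propm} to see that for nonzero boundary $s^j$ the matrix $\Arw(s^j)$ has rank $m_j$, so the normal cone to $\SS^{m_j+1}_+$ at $\Arw(s^j)$ is a half-line of rank-one matrices, and the multiplier $Y^j$ is nonzero (because $\Arw^\top Y^j=y^j\neq 0$), hence lies in the relative interior of that normal cone. Your shortcut --- verify \eqref{eq-lema.l5-1}, injectivity, closedness and the trivial-kernel condition, then quote the strengthened Lemma~\ref{mcopos.l5} (and Lemma~\ref{lem:17} for the six-partition) --- is legitimate in the current setting and is exactly how the paper now derives the partition statement; it buys a shorter argument for (iv), and even a stronger conclusion (equality of the six-partition), but it cannot replace the missing proofs of (i)--(iii).
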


\begin{proof}
Since
$\Arw^\top$ is coercive on $\SS^{m+1}_+$,
$M^\top$ is also coercive.
By lemma \ref{mcopos.l1}, we have that
$S(LSOCP^*) = M^\top S(LSDP^*)$ and
$S(LSDP^*)$ is bounded iff $S(LSOCP^*)$ is bounded.
This proves points i) and (ii).
Point (iii) is consequence of lemma  \ref{mcopos.l2}(ii).
We now prove (iv).
By lemma \ref{mcopos.l5},
$B_{LSOCP} = B_{LSDP}$, $N_{LSOCP} = N_{LSDP}$, and
$R_{LSOCP} \supset R_{LSDP}$;
it remains to prove that $R_{LSOCP} \subset R_{LSDP}$
since $(B,N,R,T)$ is a partition.
Let $j\in R_{LSOCP}$. Then there is a pair
$(x,y)$ solution of \eqref{KKT:linear_socp}
such that $s^j\neq 0\neq y^j$, and both $s^j$ and $y^j$
belong to the boundary of $Q_{m^j+1}$.
As observed after \eqref{propm},
this implies that
$\Arw s^j$ is of rank $m^j$, and hence, the corresponding set
of normals is a half line (of rank one positive semidefinite matrices,
orthogonal to $\Arw s^j$). Since the corresponding multiplier
$Y$ for problem $(LSDP)$ is such that $0 \neq y^j = \Arw^\top Y^j$,
we have that $Y^j \neq 0$, proving that
 $-Y^j$ belongs to the relative interior of the normal
cone (to the set of positive semidefinite matrices) at $\Arw s^j$.
\end{proof}

The above analysis shows that strong duality holds for problem
\eqref{pb:socp} iff it holds for problem \eqref{pb:lsdp}.
The next proposition states an interesting relation between the
solutions of \eqref{pb:socp_dual} and \eqref{pb:sdp_dual}.


\begin{prop}
\label{prop:SZhao3}
Let the strong duality property hold for problem \eqref{pb:socp}.
Let
$I$ be the set of indices in  $J$ such that there exists
$x^* \in S\eqref{pb:socp}$ satisfying
$A^jx^*\neq b^j$.
Then every $Y \in S\eqref{pb:sdp_dual}$
is such that, for some $y \in S\eqref{pb:socp_dual}$,
the following relation holds:
\begin{equation}\label{eq:sol_Y^*}
Y^j=  \, 0, \, \textrm{ if } y^j = 0; \qquad
Y^j=\half \left( \begin{matrix}
\|\yb^j\| &(\yb^j)^\top \\
\yb^j & \yb^j (\yb^j)^\top /  \|\yb^j\|
\end{matrix}\right), \,\textrm{ otherwise.}
 \end{equation}
\end{prop}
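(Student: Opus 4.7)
The plan is to exhibit the dual SOCP vector $y$ explicitly from $Y$ by setting $y^j := \Arw^\top Y^j$; using the decomposition \eqref{eq:decomposition_Y} together with \eqref{eq:defarwt}, this reads $y^j=(\Tr(Y^j),\,2\y0^j)^\top$. Lemma~\ref{mcopos.l1}(i)--(ii) then guarantees $y\in S\eqref{pb:dcop}$, because $\Arw^\top$ is coercive on $\SS^{m+1}_+$ and hence $M^\top\calk^+$ is closed. It remains to verify the componentwise formula for $Y^j$ associated to this specific $y$.

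For the branch $y^j=0$, the identity $\Tr(Y^j)=y^j_0=0$ together with $Y^j\succeq 0$ forces $Y^j=0$, matching the first case of \eqref{eq:sol_Y^*}. For the branch $y^j\neq 0$, the hypothesis $j\in I$ is essential: when $j\notin I$, $A^jx^*=b^j$ for every $x^*\in S\eqref{pb:cop}$, so complementarity imposes no restriction on $Y^j$, and one must either argue separately that $y^j=0$ for a suitable choice or read the second branch of the formula as restricted to $I$. Granting $j\in I$, pick $x^*\in S\eqref{pb:cop}$ with $s^j:=A^jx^*-b^j\neq 0$. By Lemma~\ref{mcopos.l1}(i), $x^*\in S(MP)$, and strong duality for the lifted SDP (valid thanks to \eqref{eq-lema.l5-1}) gives
\[
\sum_{k\in J}\Arw(s^k)\cdot Y^k=0.
\]
Since each summand is a Frobenius pairing of positive semidefinite matrices, each vanishes separately; in particular $\Arw(s^j)\cdot Y^j=0$.

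The observation after \eqref{propm} now shows that $s^j\in\bd L_{m_j+1}\setminus\{0\}$ forces $\Arw(s^j)$ to have rank exactly $m_j$ with one-dimensional kernel spanned by $u^j:=(s^j_0,\,-\bar s^j)^\top/(\sqrt{2}\,s^j_0)$. Orthogonality of $Y^j\succeq 0$ and $\Arw(s^j)\succeq 0$ in the Frobenius pairing forces the range of $Y^j$ into $\Ker \Arw(s^j)$, whence $Y^j=\lambda_j\,u^j(u^j)^\top$ for some $\lambda_j\geq 0$. In parallel, identity \eqref{eq:operation_Arw} rewrites $\Arw(s^j)\cdot Y^j=s^j\cdot y^j=0$; since $s^j$ and $y^j$ are nonzero vectors of the self-dual cone $L_{m_j+1}$ annihilating each other, both lie on $\bd L_{m_j+1}$ and $\bar s^j=-\|\bar s^j\|\,\bar y^j/\|\bar y^j\|$. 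Substituting this anti-parallel relation into $u^j(u^j)^\top$ and matching $\Arw^\top Y^j=y^j$ pins down $\lambda_j=\|\bar y^j\|$, producing precisely the block displayed in \eqref{eq:sol_Y^*}. The main obstacle is this rank-one extraction: it exploits crucially the corank-one structure of the arrow map on the Lorentz boundary, without which no such rigid description of $Y^j$ would be available; by contrast, once the rank-one ansatz is in place, the bookkeeping with $\y0^j$ and $\bar y^j$ is entirely mechanical.
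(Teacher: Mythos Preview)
Your argument is correct, and your careful remark that the second branch of \eqref{eq:sol_Y^*} should be read as restricted to $j\in I$ (or else requires a separate discussion) is well taken; the paper's own proof also begins with ``Let $j\in I$'' and never addresses the complementary case.

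The route you take, however, differs from the paper's. You exploit the corank-one structure of $\Arw(s^j)$ on $\bd L_{m_j+1}\setminus\{0\}$: from $\Arw(s^j)\cdot Y^j=0$ with both matrices positive semidefinite you conclude that the range of $Y^j$ lies in the one-dimensional kernel of $\Arw(s^j)$, so $Y^j$ is rank one, and you then read off the scalar multiplier from $\Arw^\top Y^j=y^j$. The paper never computes $\Ker\Arw(s^j)$. It uses only the scalar consequence $y^j_0=\|\bar y^j\|$ of complementarity (equivalently $\Tr(Y^j)=2\|\y0^j\|$), and argues via the Schur complement: $Y^j_{00}\bar Y^j-(\y0^j)(\y0^j)^\top\succeq 0$ because $Y^j\succeq 0$, while a direct trace calculation using $\Tr(Y^j)=2\|\y0^j\|$ shows this Schur complement has trace $-(Y^j_{00}-\|\y0^j\|)^2\leq 0$, forcing it to vanish and yielding $Y^j_{00}=\|\y0^j\|=\half\|\bar y^j\|$ together with $\bar Y^j=(Y^j_{00})^{-1}(\y0^j)(\y0^j)^\top$. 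Your geometric argument makes the rank-one structure transparent and explains \emph{why} the formula has this shape; the paper's algebraic argument avoids singling out a particular primal $s^j$ and works entirely with the block entries of $Y^j$. Both arrive at the same expression.
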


\begin{proof}
Let $j\in I$, $x^*$ be the associated solution of
$\eqref{pb:socp}$, and let $Y \in S\eqref{pb:sdp_dual}$.
We claim  that
\begin{equation}\label{eq:null_dec_Y^*}
Y^j_{00}\bar Y^j -(\y0^j)({\y0}^j)^\top= 0,
\end{equation}
where $Y^j_{00}$, $\bar Y^j$ and $\bar Y^j_0$ are given by
\eqref{eq:decomposition_Y}. Since
$Y^j\in \SS^{m_j+1}_+$, by Schur complement the matrix
$Y^j_{00}\bar Y^j -(\bar Y^j_0)(\bar Y^j_0)^\top$ is positive semidefinite,
and hence, it is enough to show that
\begin{equation}\label{eq:trace_dec_Y^*}
\Tr\left( Y^j_{00}\bar Y^j -(\bar Y^j_0 )(\bar Y^j_0)^\top \right) \leq 0.
\end{equation}
By strong duality, any primal-dual solution $(x^*,y^*)$
of \eqref{pb:socp} is solution of
$\eqref{KKT:linear_socp}$.
Since $A^jx^*\neq b^j$,  the complementarity condition
implies that any
$y \in S\eqref{pb:socp_dual}$
satisfies $y^j_0=\|\yb^j\|$.
Taking $y^j = \Arw^\top Y^j$,
we deduce
$
\Tr(Y^j)= y^j_0=\|\yb^j\|= 2\|\bar Y^j_0\|, $
 which implies
\begin{equation}
\label{eq:aux_trace_Y^*}
\begin{array}{ll}
\Tr\left( Y^j_{00}\bar Y^j -(\bar Y^j_0)(\bar Y^j_0)^\top\right)
& =
Y^j_{00}\Tr\left( Y^j\right) -(Y^j_{00})^2 - \|\bar Y^j_0\|^2
\\
& =
-\left(Y^j_{00}- \|\bar Y^j_0\|\right)^2\leq 0,
\end{array}
\end{equation}
proving \eqref{eq:trace_dec_Y^*} and therefore also
\eqref{eq:null_dec_Y^*}.
Combining \eqref{eq:defarwt} and \eqref{eq:aux_trace_Y^*},
obtain
\begin{equation}\label{eq:aux1_trace_Y^*}
 Y^j_{00}=\|\bar Y^j_0\|=\half \|\bar y^j\|.
 \end{equation}\noindent Now, we distinguish two cases:
a) If $Y^j_{00}=0$, we obtain from \eqref{eq:aux1_trace_Y^*} that
 $\bar Y^j_0=\yb^j=0$ and then $\Tr(Y^j)=y_0^j=0$.
Hence, since $Y^j$ is positive semidefinite
this implies $Y^j=0$.
b) Else if $Y^j_{00}\neq 0$, we get directly from
\eqref{eq:null_dec_Y^*} and \eqref{eq:aux1_trace_Y^*} that
$$
\bar Y^j=(Y^j_{00})^{-1}(\bar Y^j_0)(\bar Y^j_0)^\top
=\frac{2}{\|\yb^j\|}(\bar
y^j/2)(\yb^j/2)^\top= \half (\yb^j)(\yb^j)^\top/\|\yb^j\|,
$$
which, combined with \eqref{eq:aux1_trace_Y^*},
allows to conclude the proof.
\end{proof}

}\fi


\medskip

{\bf Acknowledgements.}
We thank to two anonymous reviewers who have substantially contributed to the improved quality of the revision.
This research was partially supported by ANID (Chile) under REDES project number 180032 and by Australian Research Council grant DE150100240.
The second author was supported by FONDECYT regular projects 1160204 and 1201982, and Basal Program CMM-AFB 170001, all from ANID (Chile).


\begin{thebibliography}{10}

\bibitem{BGLSa}
J.~Fr{\'e}d{\'e}ric Bonnans, J.~Charles Gilbert, Claude Lemar{\'e}chal, and
  Claudia~A. Sagastiz{\'a}bal.
\newblock {\em Numerical optimization}.
\newblock Universitext. Springer-Verlag, Berlin, 2003.
\newblock Theoretical and practical aspects, Translated and revised from the
  1997 French original.

\bibitem{BonRam}
J.~Fr{\'e}d{\'e}ric Bonnans and H{\'e}ctor Ram{\'{\i}}rez~C.
\newblock Perturbation analysis of second-order cone programming problems.
\newblock {\em Math. Program.}, 104(2-3, Ser. B):205--227, 2005.

\bibitem{BSbook}
J.~Fr{\'e}d{\'e}ric Bonnans and Alexander Shapiro.
\newblock {\em Perturbation analysis of optimization problems}.
\newblock Springer Series in Operations Research. Springer-Verlag, New York,
  2000.

\bibitem{CuckerPena}
Dennis Cheung, Felipe Cucker, and Javier Pe\~na.
\newblock A condition number for multifold conic systems.
\newblock {\em SIAM J. Optim.}, 19(1):261--280, 2008.

\bibitem{K02}
Etienne de~Klerk.
\newblock {\em Aspects of semidefinite programming: Interior point algorithms
  and selected applications}, volume~65 of {\em Applied Optimization}.
\newblock Kluwer Academic Publishers, Dordrecht, 2002.

\bibitem{KRT1}
Etienne de~Klerk, Cornelis Roos, and Tam\'{a}s Terlaky.
\newblock Infeasible-start semidefinite programming algorithms via self-dual
  embeddings.
\newblock In {\em Topics in semidefinite and interior-point methods ({T}oronto,
  {ON}, 1996)}, volume~18 of {\em Fields Inst. Commun.}, pages 215--236. Amer.
  Math. Soc., Providence, RI, 1998.

\bibitem{KRT2}
Etienne de~Klerk, Tam\'{a}s Terlaky, and Kees Roos.
\newblock Self-dual embeddings.
\newblock In {\em Handbook of semidefinite programming}, volume~27 of {\em
  Internat. Ser. Oper. Res. Management Sci.}, pages 111--138. Kluwer Acad.
  Publ., Boston, MA, 2000.

\bibitem{GoldmanTucker}
Alan~J. Goldman and Albert~W. Tucker.
\newblock Theory of linear programming.
\newblock In {\em Linear inequalities and related systems}, Annals of
  Mathematics Studies, no. 38, pages 53--97. Princeton University Press,
  Princeton, N.J., 1956.

\bibitem{Guler}
Osman G{\"u}ler.
\newblock {\em Foundations of optimization}, volume 258 of {\em Graduate Texts
  in Mathematics}.
\newblock Springer, New York, 2010.

\bibitem{JBHULem}
Jean-Baptiste Hiriart-Urruty and Claude Lemar{\'e}chal.
\newblock {\em Fundamentals of convex analysis}.
\newblock Grundlehren Text Editions. Springer-Verlag, Berlin, 2001.
\newblock Abridged version of { Convex analysis and minimization algorithms.
  I}.

\bibitem{IPRT00}
Tibor Ill\'{e}s, Jiming Peng, Cornelis Roos, and Tam\'{a}s Terlaky.
\newblock A strongly polynomial rounding procedure yielding a maximally
  complementary solution for {$P_*(\kappa)$} linear complementarity problems.
\newblock {\em SIAM J. Optim.}, 11(2):320--340, 2000.

\bibitem{KMY89}
Masakazu Kojima, Shinji Mizuno, and Akiko Yoshise.
\newblock A polynomial-time algorithm for a class of linear complementarity
  problems.
\newblock {\em Math. Programming}, 44(1, (Ser. A)):1--26, 1989.

\bibitem{FRedPolyh}
Bruno~F. Louren\c{c}o, Masakazu Muramatsu, and Takashi Tsuchiya.
\newblock Facial reduction and partial polyhedrality.
\newblock {\em SIAM J. Optim.}, 28(3):2304--2326, 2018.

\bibitem{MW94}
Renato D.~C. Monteiro and Stephen~J. Wright.
\newblock Local convergence of interior-point algorithms for degenerate
  monotone {LCP}.
\newblock {\em Comput. Optim. Appl.}, 3(2):131--155, 1994.

\bibitem{PatakiStrong}
G\'abor Pataki.
\newblock Strong duality in conic linear programming: facial reduction and
  extended duals.
\newblock In {\em Computational and analytical mathematics}, volume~50 of {\em
  Springer Proc. Math. Stat.}, pages 613--634. Springer, New York, 2013.

\bibitem{PR2012}
Javier Pe{\~n}a and Vera Roshchina.
\newblock A complementarity partition theorem for multifold conic systems.
\newblock {\em Math. Program.}, 142(1-2, Ser. A):579--589, 2013.

\bibitem{Ramana}
Motakuri~V. Ramana.
\newblock An exact duality theory for semidefinite programming and its
  complexity implications.
\newblock {\em Math. Programming}, 77(2, Ser. B):129--162, 1997.
\newblock Semidefinite programming.

\bibitem{StrongDuality}
Motakuri~V. Ramana, Levent Tun\c{c}el, and Henry Wolkowicz.
\newblock Strong duality for semidefinite programming.
\newblock {\em SIAM J. Optim.}, 7(3):641--662, 1997.

\bibitem{Renegar}
James Renegar.
\newblock {\em A mathematical view of interior-point methods in convex
  optimization}.
\newblock MPS/SIAM Series on Optimization. Society for Industrial and Applied
  Mathematics (SIAM), Philadelphia, PA; Mathematical Programming Society (MPS),
  Philadelphia, PA, 2001.

\bibitem{JavierNegar}
Negar Soheili and Javier Pe\~na.
\newblock A condition-based algorithm for solving polyhedral feasibility
  problems.
\newblock {\em J. Complexity}, 30(6):673--682, 2014.

\bibitem{Terlaky}
Tam{\'a}s Terlaky and Zhouhong Wang.
\newblock On the identification of the optimal partition of second order cone
  optimization problems.
\newblock {\em SIAM J. Optim.}, 24(1):385--414, 2014.

\bibitem{TuncelWolk}
Levent Tun\c{c}el and Henry Wolkowicz.
\newblock Strong duality and minimal representations for cone optimization.
\newblock {\em Comput. Optim. Appl.}, 53(2):619--648, 2012.

\bibitem{Vavasis}
Stephen~A. Vavasis and Yinyu Ye.
\newblock Condition numbers for polyhedra with real number data.
\newblock {\em Oper. Res. Lett.}, 17(5):209--214, 1995.

\bibitem{WakiMuramatsu}
Hayato Waki and Masakazu Muramatsu.
\newblock Facial reduction algorithms for conic optimization problems.
\newblock {\em J. Optim. Theory Appl.}, 158(1):188--215, 2013.

\bibitem{Yildirim}
Emre~A. Yildirim.
\newblock Unifying optimal partition approach to sensitivity analysis in conic
  optimization.
\newblock {\em J. Optim. Theory Appl.}, 122(2):405--423, 2004.

\end{thebibliography}
\end{document}